\DeclareMathOperator*{\argmin}{arg\,min}
\newcommand{\TheTitle}{{Stationary averaging for multi-scale continuous time Markov chains using parallel replica dynamics} } 
\newcommand{\TheAuthors}{Ting Wang, Petr Plech\'{a}\v{c}, and David Aristoff}
\title{\TheTitle}
\author{
  Ting Wang\thanks{University of Delaware, Newark, DE, 19716 {(\email{tingw@udel.edu})}},
  \and
  Petr Plech\'{a}\v{c}\thanks{University of Delaware, Newark, DE, 19716 {(\email{plechac@math.udel.edu})}},    
  \and
  David Aristoff\thanks{Colorado State University, Fort Collins, CO, 80523 {(\email{aristoff@rams.colorado.edu})}}.
}
\begin{document}

\maketitle

\begin{abstract}
We propose two algorithms for 
simulating continuous time Markov chains in the presence 
of metastability. We show that the algorithms correctly estimate, under the ergodicity assumption,
stationary averages of the process. Both algorithms, based on the idea of the parallel replica method,
use parallel computing in order to 
explore metastable sets more 
efficiently. The algorithms require no assumptions 
on the Markov chains beyond ergodicity and the presence 
of identifiable metastability. 
 In particular, there is no
assumption on reversibility.
For simpler illustration of the algorithms, we assume that a synchronous architecture is used
throughout of the paper.
We present 
error analyses, as well as 
numerical simulations 
on multi-scale stochastic
reaction network models in order 
to demonstrate consistency 
of the method and its efficiency.

\end{abstract}

\begin{keywords}
Markov chains, Monte Carlo, reversibility, stationary distribution, metastability, parallel replica, stochastic reaction networks, multi-scale dynamics, 
coarse graining 
\end{keywords}

\begin{AMS}
60J22, 65C05, 65Z05, 82B31, 92E20
\end{AMS}


\section{Introduction}
We focus on computing stationary averages of continuous time Markov chains in a synchronous architecture. 
More precisely, if $\pi$ is the stationary distribution of a continuous time Markov chain (CTMC) and 
$f$ is a function on the state space, we aim at estimating the average 
$\pi(f)\equiv \mathbb{E}_\pi[f]$ by taking a time average on a long trajectory of the CTMC. 
There are many methods 
for computing stationary 
averages of stochastic 
processes, however, the 
vast majority of them 
rely on reversibility of 
the process, e.g., as in Markov chain Monte Carlo~\cite{rubinstein}. 
Computational cost of the ergodic (trajectory) averaging becomes prohibitive when the convergence to the stationary
distribution is slow due to metastability of the dynamics, for example in the presence of rare events or large time scale 
disparities (multi-scale dynamics), \cite{schutte}. 
A possible remedy for this issues is to use 
parallel computing in order to accelerate sampling of the state space.
For instance, the parallel tempering method (also known as the replica exchange) 
\cite{geyer1991markov,earl2005parallel,dupuis2012infinite,plattner2011infinite} has been successfully applied to many problems 
by simulating multiple replicas of the original systems,  each replica at a different temperature. 
However, the method requires the time reversibility of the underlying processes, which is typically 
not true for processes that model chemical reaction networks or systems with non-equilibrium steady states.
In fact, 
there are not many methods that parallelize 
Monte Carlo simulation for irreversible processes with metastability, in particular if long-time 
sampling such as ergodic averaging, is required.
We present a parallel computing approach for CTMCs without time reversibility.
One advantage of the proposed algorithms is that they may be used, 
in principle, on arbitrary CTMCs. 
The same idea applies to the continuous state space Markov processes. 
However, gains in efficiency can occur only if the process is metastable.

In this contribution we consider only models described by continuous time Markov chains. 
As a motivating example we study a multi-scale chemical reaction network 
model in which molecules of different types react with different rates
depending on their concentrations and reaction rate constants. 
In this model metastability 
emerges due to the infrequent occurrence of reactions with 
small rates which makes the relaxation to the steady state dynamics extremely slow. 
In the transient regime the finite time distribution can be approximated using the stochastic averaging technique
\cite{weinan2005nested,TTS}, or the tau-leap method \cite{rathinam2003stiffness}. 
However, the former does not apply for stationary distribution estimation
and that the error they introduce on the stationary state is generally difficult to evaluate; 
the latter can be still computationally expensive for long-time simulations.
It is thus desirable to have an efficient algorithm for computing the stationary averages. 
Thus the proposed algorithm will provide a new multi-scale 
simulation method (in particular for stationary averaging estimation) 
for the stochastic reaction networks community.

The presented approach builds on the parallel replica (ParRep) dynamics introduced in the context of
molecular simulations in \cite{voter1998parallel}. The ParRep method used in the context of stochastic differential
equations, e.g. Langevin dynamics, was rigorously analysed in \cite{ParRep-SDE, perez2015parallel}. 
The algorithm we present and analyse builds on the recent work of \cite{ParRep-stationary,ParRep-Chain} where
the ParRep process was studied for discrete-time Markov chains.
In our algorithms, 
each time the simulation 
reaches a local equilibrium 
in a metastable set~$W$, 
$R$ independent replicas of the CTMC are launched 
inside the set allowing for parallel simulations of the dynamics at this stage.
The main contribution of this work is a procedure 
for using the replicas  in order
to {\it efficiently} and {\it consistently} 
estimate
the exit time and exit state 
from $W$, along with
the contribution to the 
{\it stationary time average} 
of $f$ from the 
time spent in $W$. We 
emphasize that we are able 
to handle arbitrary functions (or observables)
on the state space, not only those 
that are piece-wise constant, i.e., assuming a single value in each 
$W$.
In the best case, if 
there are $R$ replicas, 
then the simulation leaves
a metastable set about $R$ 
times faster compared to a direct 
serial simulation. 
The consistency of our 
algorithms relies on 
certain properties of the quasi-stationary distribution (QSD) 
which are essentially 
local equilibria associated 
with the metastable sets. 
 
We propose two algorithms 
for computing $\pi(f)$, 
called {\it CTMC ParRep} and 
{\it embedded ParRep}. The 
former uses parallel 
simulation 
of the CTMC, while the 
latter employs parallel simulation 
of its embedded chain, which is 
a discrete time Markov chain 
(DTMC). CTMC ParRep 
(resp. embedded ParRep) relies 
on the fact that, starting at 
the QSD in a metastable set, 
the first time to leave the set 
is an exponential (resp. geometric) random variable
and independent of the exit 
state; see Theorem~\ref{thm:QSD-property} 
below. The algorithms require 
some methods for identifying 
metastable sets, though this 
need not be done a priori -- 
it is sufficient to identify when 
the CTMC is currently 
in a metastable set, 
and when it exits such set. While both 
algorithms can be useful 
for efficient simulation of $\pi(f)$ in 
the presence of metastability, 
we expect the embedded ParRep can 
be significantly more efficient, 
especially when combined with 
a certain type of QSD sampling, 
called Fleming-Viot~\cite{binder2015generalized, blanchet2014theoretical}. 
Though we focus here on 
the computation of $\pi(f)$, 
we note that one of our 
algorithms, 
CTMC ParRep, can be used 
to compute the dynamics 
of the CTMC on a coarse 
space in which each 
metastable set is 
considered a single (meta-)state. 
See the discussion below 
Algorithm~\ref{alg:CTMC-ParRep}.

The advantages of the proposed algorithms include:
(a) no requirement of time reversibility for the underlying dynamics;
(b) they are suitable for long-time sampling;
(c) they may be used, in principle, on arbitrary CTMCs in the presence of metastability.

In Section~2, 
we briefly review CTMCs 
before defining 
QSDs and detailing 
relevant properties 
thereof. In Section~3, 
we present CTMC ParRep, 
and study how the error 
in the algorithm depends 
on the quality of QSD sampling. 
In Section~4, we present 
embedded ParRep and provide an
analogous error analysis. 
We detail some numerical
experiments on multi-scale
chemical reaction network 
model in
Section~5 in order to demonstrate the consistency and accuracy of the algorithms.

\section{Background and problem formulation}

\subsection{Continuous Time Markov Chains}\label{sec:assume}
Throughout this paper, $X(t)$ is an irreducible and 
positive recurrent continuous time Markov chain (CTMC) with 
values in a countable set~$E$ and $\pi$ denotes the stationary distribution of $X(t)$. We are interested in computing 
stationary averages $\pi(f)$ 
for a bounded
function $f:E \to {\mathbb R}$ by using
the ergodic theorem
\begin{equation}\label{erg}
\lim_{t \to \infty}\frac{1}{t}\int_0^t f(X(s)) ds = \pi(f),
\end{equation}
which holds almost surely for any initial distribution of $X(t)$. The jump times $\tau_n$ 
and holding times $\Delta \tau_n$ 
for $X(t)$ are defined recursively by
\begin{equation*}
\tau_0 = 0, \qquad \tau_n = \inf\{t>\tau_{n-1}:
X(t) \ne X(\tau_{n-1})\}, 
\end{equation*}
and 
\[
\Delta \tau_{n-1} = \tau_{n}-\tau_{n-1} 
\]
for $ n\ge 1$.
We assume that $X(t)$ is non-explosive, 
that is, $\lim_n \tau_n = \infty$ 
almost surely for every initial distribution of $X(t)$. 
This precludes the possibility of  infinitely 
many jumps in finite time.
We denote $X_n = X(\tau_n)$ 
the embedded chain of $X(t)$. 
It is easy to see that 
$X_n$ is a discrete time Markov chain (DTMC).

Recall that $X(t)$ is completely 
determined by its 
infinitesimal generator 
matrix $Q = \{q(x,y)\}_{x,y \in E}$.  
Recall that by convention q(x,x) is chosen such that $\sum_y q(x,y)=0$
and we write $q(x) := -q(x,x)$. 
Note that  
irreducibility implies $q(x)>0$ 
for all $x \in E$. 
It is easy to check 
that $X_n$
has the
transition probability matrix $P = \{p(x,y)\}_{x,y \in E}$ satisfying 
\begin{equation*}
p(x,y) = \begin{cases} \frac{q(x,y)}{q(x)}, & x \ne y, \\ 
                                         0, & x= y 
\end{cases}.
\end{equation*}
We state the following 
well known fact for the later 
reference.

\begin{lemma}\label{lem:conditioned-exp}
For a CTMC $X(t)$ with the corresponding embedded Markov chain $X_n$, the holding time between successive jumps 
$\Delta \tau_0, \Delta \tau_1, \cdots, \Delta \tau_i, \cdots$ are independent conditioned on the embedded chain $X_n$. 
Moreover, $\Delta \tau_i | \{X_n\}$ is exponentially distributed with the rate   
$q(X_i)$
and hence $\mathbb{E}\left[\Delta \tau_i | \{X_n\}\right]= q(X_i)^{-1}$.
\end{lemma}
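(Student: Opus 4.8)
The plan is to reduce the full conditional-independence statement to a single-step computation and then bootstrap it using the strong Markov property. First I would isolate the joint law of the first holding time $\Delta\tau_0 = \tau_1$ and the first jump destination $X_1$, starting from a fixed state $X_0 = x$. Using the memoryless property inherited from the Markov property of $X(t)$ --- namely that on the event $\{\tau_1 > s\}$ the process is still in $x$ at time $s$ and, by the Markov property, forgets the elapsed time --- I would show $\mathbb{P}_x(\tau_1 > s+t \mid \tau_1 > s) = \mathbb{P}_x(\tau_1 > t)$, so that $\tau_1$ is exponentially distributed. The rate is pinned down by the infinitesimal description encoded in the generator, $\mathbb{P}_x(\tau_1 > h) = 1 - q(x)h + o(h)$, giving rate $q(x)$; a parallel short-time computation identifies the jump target via $\mathbb{P}_x(X_1 = y) = q(x,y)/q(x) = p(x,y)$, consistent with the embedded transition matrix $P$ defined above. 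The key outcome of this step is the factorization
\begin{equation*}
\mathbb{P}_x(\tau_1 > t,\, X_1 = y) = e^{-q(x)t}\, p(x,y),
\end{equation*}
which already exhibits $\Delta\tau_0$ as exponential with rate $q(x)$ and independent of $X_1$.

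Next I would iterate this using the strong Markov property at the jump times $\tau_n$. Since each $\tau_n$ is a stopping time and $X(t)$ is non-explosive, the process restarts afresh from $X_n$ at time $\tau_n$, independently of the pre-$\tau_n$ history given $X_n$. Applying the single-step factorization to the restarted chain yields, by induction on $n$, the joint law
\begin{equation*}
\mathbb{P}\big(\Delta\tau_0 > t_0, \ldots, \Delta\tau_n > t_n,\, X_1 = y_1, \ldots, X_{n+1} = y_{n+1} \,\big|\, X_0 = y_0\big) = \prod_{i=0}^{n} e^{-q(y_i)t_i}\, p(y_i, y_{i+1}).
\end{equation*}
Reading off this product form, the conditional law of $(\Delta\tau_0, \ldots, \Delta\tau_n)$ given the embedded trajectory $(X_0, \ldots, X_{n+1}) = (y_0, \ldots, y_{n+1})$ is exactly a product of exponentials with rates $q(y_i)$, which is precisely the claimed conditional independence together with $\Delta\tau_i \mid \{X_n\} \sim \mathrm{Exp}(q(X_i))$; the mean $q(X_i)^{-1}$ then follows at once.

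I expect the main obstacle to be the careful bookkeeping in the strong Markov step rather than any deep difficulty. One must verify that conditioning on the entire embedded sequence $\{X_n\}$ (not merely on finitely many states) yields mutual --- not just pairwise --- conditional independence, and that the factorization is stable as $n \to \infty$. Non-explosiveness is what guarantees that the jump times $\tau_n$ are all finite and the embedded chain is well defined for every $n$, so the infinite family of conditional laws is the consistent projective limit of the finite-dimensional ones; a monotone-class or Dynkin-system argument then makes the passage to the full conditioning on $\{X_n\}$ rigorous.
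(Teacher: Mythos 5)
The paper does not actually prove this lemma: it is stated as a well-known fact with a pointer to the reference \cite{baby-Bremaud}, and your argument is precisely the standard textbook proof found there --- memorylessness of the first holding time via the Markov property, identification of the rate $q(x)$ and the jump law $p(x,y)$ from the generator, the single-step factorization, iteration via the strong Markov property at the jump times $\tau_n$, and a monotone-class passage to conditioning on the whole embedded chain (with non-explosiveness ensuring every $\tau_n$ is finite). The one step worth writing out rather than asserting is the independence of $\tau_1$ and $X_1$ inside the factorization $\mathbb{P}_x(\tau_1>t,\,X_1=y)=e^{-q(x)t}p(x,y)$: it does not follow from the two marginal computations alone, but it does follow from the same Markov-property-at-time-$s$ argument, namely $\mathbb{P}_x(X_1=y\mid \tau_1>s)=\mathbb{P}_x(X_1=y)$ for all $s$, so your plan is complete and correct.
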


For details on the above facts, 
see for instance~\cite{baby-Bremaud}.

\subsection{The Quasi-stationary Distribution and Metastability}
Below, we write ${\mathbb P}$, 
${\mathbb E}$ 
for various probabilities and 
expectations, the precise meaning of which will be clear from context. We use 
a superscript ${\mathbb P}^\xi$, 
${\mathbb E}^\xi$ to indicate that  
the initial distribution is $\xi$.  
When the initial distribution is $\delta_x$, we write ${\mathbb P}^x$, 
${\mathbb E}^x$. The 
symbol $\sim$ will indicate equality in probability law. $\textup{Re}(\cdot)$ and $|\cdot|$
denote the real part and modulus of a complex number.

Our ParRep algorithms rely on 
certain properties of quasi-stationary distributions, 
which we now briefly review.
Let $W\subset E$ be fixed and consider the first exit time of $X(t)$ from $W$, that is,
\[
T = \inf\{t > 0; X(t) \notin W\}.
\]
We consider also the first exit time of 
$X_n$ from $W$,
\[ 
N = \inf\{n>0; X_n \notin W\}.
\]
A quasi-stationary distribution (QSD) of $X(t)$ in $W$ (or $X_n$ in $W$) is defined as follows.
\begin{definition}
A probability distribution $\nu$ with support in $W$ is a quasi-stationary distribution 
for $X(t)$ in $W$ if for each $y \in W$ and $t>0$, 
\begin{equation}\label{eqn:QSD-CTMC}
\nu(y) = \mathbb{P}^{\nu}(X(t) = y\,|\,T>t).
\end{equation} 
Similarly, a probability distribution $\mu$ with support in $W$ is a QSD for $X_n$ in $W$ if for each $y \in W$ and  $n>0$, 
\begin{equation}\label{eqn:QSD-DTMC}
\mu(y) = \mathbb{P}^{\mu}(X_n = y \,|\,N>n).
\end{equation}
\end{definition}
Throughout we write $\nu$ for a QSD of the CTMC $X(t)$ and $\mu$ for a QSD of the embedded chain $X_n$. The associated 
set $W$ will be implicit since no 
ambiguities should arise. We will write
\begin{equation}\label{eqn:CTMC-Yaglom}
\nu_t(A) = \mathbb{P}^x(X(t) \in A\,|\, T>t) 
\end{equation}
for the distribution of $X(t)$ 
conditioned on $T>t$, and 
\begin{equation}\label{eqn:DTMC-Yaglom}
\mu_n(A) = \mathbb{P}^x(X_n \in A\,|\, N>n).
\end{equation}
for the distribution of $X_n$ conditioned on $N>n$. Notice we do not 
make explicit the dependence on the starting point $x$.

We summarize existence, uniqueness, 
and convergence properties of the QSD in Theorem~\ref{thm:QSD-spectral} below (see \cite{QSD,non-negativeMC}). In Theorem~\ref{thm:QSD-spectral} below, for simpler presentation, we assume $W$ is finite. That 
allows us to characterize 
convergence to the QSD of $X(t)$ and $X_n$  in terms of spectral properties of their 
generator and transition matrices.
We emphasize, however, there exists more general results to guarantee the convergence to the QSD and hence the finiteness of $W$ is not necessary for consistency of the algorithms proposed in this paper.

Recall that $Q$ is the infinitesimal generator matrix of $X(t)$ and $P$ is the transition probability matrix of the DTMC $X_n$. 
We denote $Q_W = \{q_{xy}\}_{x,y \in W}$ and $P_W = \{p_{xy}\}_{x,y \in W}$ the restrictions of $P$ and $Q$ to $W$.

\begin{theorem}\label{thm:QSD-spectral}
Let $W$ be finite and nonabsorbing 
for $X(t)$, and assume $P_W$ 
is irreducible. 
\begin{itemize}
\item [(a)] 
The eigenvalues $\lambda_1,\lambda_2,\ldots$ of $Q_W$ can be ordered so that
\[0> \lambda_1 > \textup{Re}(\lambda_2)\geq \ldots,\]
where $\lambda_1$ has the left eigenvector $\nu$ which is a probability distribution on $W$.
Moreover, $\nu$ is the unique quasi-stationary distribution of $X(t)$ in $W$, and for all $x,y \in W$,
\begin{equation}\label{eqn:rate-QSD-CTMC}
\left|\nu_t(y) - \nu(y)\right|  = 
\left|{\mathbb P}^x(X(t) = y|T>t) - 
\nu(y)\right|
\le C(x)e^{-(\lambda_1-\beta) t},
\end{equation}
with $C(x)$ a constant depending on $x$, and $\beta$ any real number satisfying
$\textup{Re}(\lambda_2) < \beta < \lambda_1$. 

\item[(b)] Suppose $P_W$ is also 
aperiodic. Then the eigenvalues $\sigma_1,\sigma_2,\ldots$ of $P_W$ can be ordered so that 
\[
1>\sigma_1 > |\sigma_2|\geq \ldots,
\]
where $\sigma_1$ has the left eigenvector $\mu$ which is a probability distribution on $W$. 
Moreover, $\mu$ is the unique quasi-stationary distribution of $X_n$ in $W$ and for all $x,y \in W$,
\begin{equation}\label{eqn:rate-QSD-DTMC}
\left|\mu_n(y) - \mu(y)\right| 
= \left|{\mathbb P}^x(X_n = y|N>n) - \mu(y)\right|
\le D(x)\left(\frac{\gamma}{\sigma_1}\right)^n,
\end{equation}
with $D(x)$ a constant depending on $x$, 
and $\gamma$ any real number satisfying $|\sigma_2| < \gamma < \sigma_1$.
\end{itemize}
\end{theorem}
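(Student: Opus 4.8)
The plan is to reduce both parts to the Perron--Frobenius theorem, applied to the substochastic semigroup $e^{Q_W t}$ in part (a) and to $P_W$ itself in part (b). First I would observe that, since $p_{xy}$ and $q_{xy}$ share the same off-diagonal zero pattern (they differ only by the positive factor $q(x)$), irreducibility of $P_W$ is equivalent to irreducibility of $Q_W$ as a matrix on $W$. Writing $Q_W + cI$ with $c \ge \max_{x \in W} q(x)$ produces an irreducible nonnegative matrix, so for every $t > 0$ the matrix $e^{Q_W t} = e^{-ct} e^{(Q_W + cI)t}$ has all entries strictly positive. Thus $e^{Q_W t}$ is primitive for each fixed $t > 0$, and --- crucially --- this holds with no aperiodicity hypothesis, which is exactly why part (a) needs none while part (b) does.

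Next I would apply Perron--Frobenius to $e^{Q_W t}$: it has a simple, real, strictly dominant eigenvalue, necessarily of the form $e^{\lambda_1 t}$ with $\lambda_1$ real and $\lambda_1 > \textup{Re}(\lambda_i)$ for every other eigenvalue $\lambda_i$ of $Q_W$, and the associated left eigenvector is strictly positive; normalizing it to a probability vector yields $\nu$. To get $\lambda_1 < 0$ I would use that $W$ is nonabsorbing, so at least one row sum of $e^{Q_W t}$ is strictly below $1$; for an irreducible substochastic matrix with a deficient row the Perron root is strictly less than $1$, giving $e^{\lambda_1 t} < 1$. The QSD identity then falls out of the computation $\mathbb{P}^\nu(X(t) = y,\, T > t) = (\nu e^{Q_W t})_y = e^{\lambda_1 t}\nu(y)$, whose sum over $y$ gives $\mathbb{P}^\nu(T > t) = e^{\lambda_1 t}$, so the conditional law is exactly $\nu$. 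Uniqueness follows because any QSD is forced by its defining relation to be a positive left eigenvector of $e^{Q_W t}$, and Perron--Frobenius permits only one such up to scaling.

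For the convergence rate I would expand $e^{Q_W t}$ spectrally as $e^{\lambda_1 t} u\,\nu + R(t)$, where $u$ is the right Perron eigenvector normalized by $\nu u = 1$ and the remainder obeys $\|R(t)\| \le C e^{\beta t}$ for any $\beta$ with $\textup{Re}(\lambda_2) < \beta$. Writing $\nu_t(y)$ as the ratio $(\delta_x e^{Q_W t})_y / (\delta_x e^{Q_W t}\mathbf{1})$ and dividing numerator and denominator by $e^{\lambda_1 t} u(x)$, the leading terms produce $\nu(y)$ while the corrections are $O(e^{-(\lambda_1 - \beta)t})$, with the constant $C(x)$ absorbing the $x$-dependence through $u(x)$. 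I expect the main technical obstacle to be precisely this estimate when $\lambda_2$ is defective: the spectral expansion then carries polynomial-in-$t$ prefactors, and the role of the strict inequality $\beta < \lambda_1$ is exactly to absorb those polynomials into the exponential decay. Part (b) runs entirely in parallel, with $P_W$ primitive (here aperiodicity is what guarantees primitivity, hence $\sigma_1 > |\sigma_2|$), the identities $\mathbb{P}^\mu(X_n = y,\, N > n) = \sigma_1^n \mu(y)$ and $\mathbb{P}^\mu(N > n) = \sigma_1^n$, and the same ratio estimate giving the bound with $(\gamma/\sigma_1)^n$.
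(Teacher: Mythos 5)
Your proposal is correct and follows essentially the same route as the paper: reduce to Perron--Frobenius theory (the paper cites Seneta's Theorems 2.6--2.7 for the eigenvalue ordering and the spectral expansion, which you instead derive directly via the $Q_W+cI$ shift and primitivity of $e^{Q_W t}$), verify the QSD identity through $\nu e^{Q_W t}=e^{\lambda_1 t}\nu$, and obtain the convergence rate by the same ratio-of-spectral-expansions argument. The extra details you supply --- the deficient-row argument for $\lambda_1<0$ and the remark on defective $\lambda_2$ --- are consistent with, and slightly more explicit than, what the paper writes.
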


\begin{proof}
We first justify the expression for the eigenvalues. 
Observe that for $x \ne y \in W$, 
we have $q(x,y) >0$ 
if and only if $p(x,y) >0$. 
It follows that $Q_W$ is 
irreducible if and only if 
$P_W$ is irreducible; see 
Definition 2.1 in~\cite{non-negativeMC}. Now 
let $I$ be the all ones column 
vector, $I(x) = 1$ for $x \in W$. 
Recall that $q(x,y) \ge 0$ for 
every $x \ne y \in E$ and 
$\sum_y q(x,y) = 0$ for every $x \in E$.  
This implies that $Q_W I \le 0$ 
component-wise. 
Since 
$W$ is non-absorbing, for 
some $x \in W$ and $y \notin W$ we have 
$q(x,y) > 0$, and it follows 
that $\sum_{z \in W} q(x,z) < 0$. This shows that at least one 
component of $Q_W I$ is 
strictly negative. The expression 
for the eigenvalues, and 
the fact that $\nu$ is signed (hence 
a probability distribution, after 
normalization) 
now follows from 
Theorem 2.6 of Seneta~\cite{non-negativeMC}.

To see $\nu$ is the QSD for $X(t)$ in $W$, we define the stopped process $X^T(t) = X(t \wedge T)$ such that $X(t)$ is 
absorbed outside $W$. 
For any $x,z \in E$, let $I_x$ be 
the column vector 
$I_x(z) = 1$ if $x = z$
and $I_x(z) = 0$ otherwise. 
Finiteness of $W$ ensures 
that ${\mathbb P}^x(X^T(t) = y) = I_x^{\prime} e^{Q_W t}I_y$. Thus, for each $y \in W$, 
\[
\mathbb{P}^{\nu}(X(t) = y, T > t)=\mathbb{P}^{\nu}(X^T(t) = y) = 
\nu e^{Q_W t} I_y =  e^{\lambda_1t}\nu(y)
\]
and
\[
\mathbb{P}^{\nu}(T > t) = \mathbb{P}^{\nu}(X^T(t) \in W)=e^{\lambda_1 t},
\]
which leads to $\nu(y) = \mathbb{P}^{\nu}(X(t) = y|T>t)$.

Now we turn to the convergence to $\nu$.  It follows from Theorem~2.7 in~\cite{non-negativeMC} that there is a constant 
$C(x)$ depending on $x$ such 
that for any real $\beta$ with $\textup{Re}(\lambda_2) < \beta$, 
\begin{equation}\label{eqn:PF-docomposition}
\mathbb{P}^x(X(t) = y, T>t)  = \mathbb{P}^x(X^T(t) = y)=C(x) e^{\lambda_1 t}\nu(y) + \mathcal{O}(e^{\beta t})
\end{equation}
and
\begin{equation}
\mathbb{P}^x(T>t) = C(x) e^{\lambda_1 t} + \mathcal{O}(e^{\beta t}),
\end{equation} 
It follows that
\[
|\nu_t(t)-\nu(y)| = |\mathbb{P}^x(X(t) = y\,|\, T>t) 
- \nu(y)| \leq C(x) e^{-(\lambda_1-\beta)t}
\]
where $C(x)$ is now a (possibly different) constant depending on $x$.

The arguments in (b) are similar, using the Perron-Frobenius theorem (Seneta~\cite[Theorem 1.1]{non-negativeMC}).

\end{proof}

For analogous results on the QSD in more general settings, see~\cite[Theorem4.5]{QSD} for CTMCs and \cite[Theorem 1]{del2004particle} 
for DTMCs. 
We are now ready to define metastability.
\begin{definition}\label{def:ms}
Let $W$ and $\lambda_i,\sigma_i$ be as in Theorem~\ref{thm:QSD-spectral}.
\begin{itemize}
\item[1.] 
$W$ is metastable for $X(t)$ if $\lambda_1 \approx 0$ and
\begin{equation}\label{eqn:metastable-CTMC}
|\lambda_1| \ll |\lambda_1-\textup{Re}(\lambda_2)|.
\end{equation}
$X(t)$ is metastable if it has at least 
one metastable set $W$.
\item[2.] $W$ is metastable for $X_n$ if 
$\sigma_1 \approx 1$ and
\begin{equation}\label{eqn:metastable-DTMC}
\sigma_1 \gg \frac{|\sigma_2|}{\sigma_1}.
\end{equation}
$X_n$ is metastable if it has at least 
one metastable set $W$.
\end{itemize}
\end{definition}
In light of Theorem~\ref{thm:QSD-spectral}, Conditions 1-2 in Definition~\ref{def:ms} essentially say that the time to leave $W$ is large 
in an absolute sense, and 
the time to leave $W$ is large relative to the time to converge to the QSD in $W$.
Metastability of the CTMC is not necessarily equivalent to the metastability of its underlying embedded chain, as we now show.
Consider $X(t)$ with the infinitesimal 
generator
\begin{equation*}
Q = \begin{pmatrix} 
      -1 & 1/2 & 1/2 & 0 \\ 
     1/2 & -1 & 1/2 & 0 \\ 
     0   & \epsilon/2 & -\epsilon & \epsilon/2 \\
     0   & 0 & 1 & -1
    \end{pmatrix}, 
\end{equation*}
where $\epsilon \approx 0$ is positive.
Then $W = \{1,2,3\}$ is metastable 
for $X(t)$ but not for $X_n$, since 
\begin{equation*}
\sigma_1 \approx 0.81,\quad 
|\sigma_2| \approx 1/2, \qquad 
\lambda_1 \approx -\epsilon/2, \quad 
\textup{Re}(\lambda_2) \approx -1/2.
\end{equation*}
Now consider $X(t)$ with the infinitesimal generator
\begin{equation*}
Q = \begin{pmatrix} 
    -\epsilon^{-1}  & \epsilon^{-1}/2 & \epsilon^{-1}/2 & 0 \\ 
     \epsilon^{-1}-1& -\epsilon^{-1} & 1  & 0 \\ 
                  0 & \epsilon^{-1}-1& -\epsilon^{-1} & 1 \\
                  0 & 0 & 1 & -1
    \end{pmatrix}.
\end{equation*}
Then $W = \{1,2,3\}$ is 
metastable for $X_n$
but not for $X(t)$, 
since 
\begin{equation*}
\sigma_1 \approx 1-\epsilon/5, \quad 
|\sigma_2| \approx \sqrt{2}/2, \qquad 
\lambda_1 \approx -1/5, \quad 
\textup{Re}(\lambda_2) \approx -3\epsilon^{-1}/2.
\end{equation*}

Algorithm~\ref{alg:CTMC-ParRep} below
requires a collection of 
metastable sets for $X(t)$, 
and Algorithm~\ref{alg:embedded-ParRep} 
requires a collection of 
metastable sets for $X_n$. 
The only assumption we make on 
these sets is that they are 
pairwise disjoint. (The sets may be different 
for the two algorithms, as noted above.)
Throughout we write $W$ to denote a
generic metastable set. 
We emphasize that {\em we do not assume the
metastable sets form a partition 
of $E$}: the union of the 
metastable sets may be a proper 
subset of $E$. 
Here and below, 
we assume that each $W$ has 
a unique QSD and that 
$\nu_t$ (and $\mu_t$) converge 
to the QSD in total variation norm, 
for any starting point $x$.
Recall that this 
is true under the assumptions of Theorem 3

We conclude this section 
by mentioning properties 
of the QSD which are essential 
for the consistency 
of our algorithms 
in Section~3 and~4 below. 

\begin{theorem}\label{thm:QSD-property}
~
\begin{itemize}
\item[1.] Suppose $X(0) \sim \nu$. Then $T$ is exponentially distributed with the parameter
$-\lambda_1$: 
\[\mathbb{P}^{\nu}(T>t) = e^{\lambda_1 t}, 
\qquad t>0,\]
and $T$ and $X(T)$ are independent.
\item[2.] Suppose $X_0 \sim \mu$. Then $N$ is geometrically distributed with the parameter $1- \sigma_1$:
\[
\mathbb{P}^{\mu}(N > n) = \sigma_1^n, 
\qquad n=1,2,\ldots,
\]
and $N$ and $X_N$ are independent.
\end{itemize}
\end{theorem}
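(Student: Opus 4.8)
The plan is to treat the two statements in parallel, since they share the same structure, and to lean on computations already carried out in the proof of Theorem~\ref{thm:QSD-spectral}. For the distribution of the exit time in part~1, the work is essentially done there: we showed $\mathbb{P}^\nu(X(t)=y,\,T>t)=e^{\lambda_1 t}\nu(y)$ and hence $\mathbb{P}^\nu(T>t)=e^{\lambda_1 t}$, which is exactly the claimed exponential law with parameter $-\lambda_1$ (recall $\lambda_1<0$). For part~2 I would first record the discrete analogue: since $\mu$ is the left eigenvector of $P_W$ with eigenvalue $\sigma_1$, iterating gives $\mathbb{P}^\mu(X_n=y,\,N>n)=(\mu P_W^n)(y)=\sigma_1^n\mu(y)$, and summing over $y\in W$ yields $\mathbb{P}^\mu(N>n)=\sigma_1^n$, i.e. the geometric law with parameter $1-\sigma_1$.

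For the independence claims, the key tool is the ordinary Markov property at the deterministic time $t$ (resp. step $n$). Fix $z\notin W$ and $t>0$, and set $g(y)=\mathbb{P}^y(X(T)=z)$ for $y\in W$. On the event $\{T>t\}$ the chain is still inside $W$ at time $t$, and because $t$ is deterministic this event is measurable with respect to the history up to $t$; the subsequent exit time and exit state then depend on the past only through $X(t)$. Thus I would write
\[
\mathbb{P}^\nu(X(T)=z,\,T>t)=\mathbb{E}^\nu\!\left[\mathbbm{1}_{\{T>t\}}\,g(X(t))\right]=\sum_{y\in W}\mathbb{P}^\nu(X(t)=y,\,T>t)\,g(y).
\]
Inserting $\mathbb{P}^\nu(X(t)=y,\,T>t)=e^{\lambda_1 t}\nu(y)$ factors the sum as
\[
\mathbb{P}^\nu(X(T)=z,\,T>t)=e^{\lambda_1 t}\sum_{y\in W}\nu(y)g(y)=\mathbb{P}^\nu(T>t)\,\mathbb{P}^\nu(X(T)=z),
\]
using $\sum_{y}\nu(y)g(y)=\mathbb{P}^\nu(X(T)=z)$. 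Since this factorization holds for every $t>0$ and every $z\notin W$, and $X(T)$ is discrete, it is precisely the asserted independence of $T$ and $X(T)$. The embedded-chain case is identical, with $g$ replaced by $h(y)=\mathbb{P}^y(X_N=z)$ and the QSD identity $\mathbb{P}^\mu(X_n=y,\,N>n)=\sigma_1^n\mu(y)$.

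The step requiring the most care is the first equality in the display, i.e. the Markov-property decomposition. I would argue cleanly that on $\{T>t\}$ one has $X(t)\in W$, that $T=t+T'$ and $X(T)=X(t+T')$ where $T'$ is the exit time of the time-shifted process $(X(t+s))_{s\ge0}$, and that conditioning on the pre-$t$ history collapses to conditioning on $X(t)$ alone. Because $t$ is deterministic this needs only the simple Markov property rather than the strong version, which is a convenient simplification. I would also note that non-explosivity together with $W$ being non-absorbing guarantees $T<\infty$ (resp. $N<\infty$) almost surely, so that $g$ and $h$ are genuine distributions over the exit states and all the sums above are well defined.
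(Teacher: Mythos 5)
Your proposal is correct, and it is worth noting where it diverges from the paper. For the distributional claims ($\mathbb{P}^{\nu}(T>t)=e^{\lambda_1 t}$ and $\mathbb{P}^{\mu}(N>n)=\sigma_1^n$) you follow exactly the paper's route: these are read off from the eigenvector identities $\mathbb{P}^{\nu}(X(t)=y,\,T>t)=e^{\lambda_1 t}\nu(y)$ and $\mathbb{P}^{\mu}(X_n=y,\,N>n)=(\mu P_W^n)(y)=\sigma_1^n\mu(y)$ established (or implicit) in the proof of Theorem~\ref{thm:QSD-spectral}, and the paper's own proof of Theorem~\ref{thm:QSD-property} says no more than this. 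For the independence of the exit time and exit state, however, the paper gives no argument at all --- it simply refers the reader to the QSD literature --- whereas you supply a short self-contained proof: condition on the history up to the deterministic time $t$ (resp.\ step $n$), use that $\{T>t\}$ is measurable with respect to that history and that the future exit data depend on the past only through $X(t)$, and then observe that the QSD identity makes the resulting sum factor as $\mathbb{P}^{\nu}(T>t)\,\mathbb{P}^{\nu}(X(T)=z)$. This argument is sound (the factorization over the generating family $(t,\infty)\times\{z\}$ does pin down independence, and only the simple Markov property is needed since $t$ is deterministic), and it is essentially the standard proof one finds in the cited references; what it buys is a self-contained treatment at the cost of a little bookkeeping the paper chose to outsource. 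One cosmetic quibble: $T<\infty$ a.s.\ is most cleanly justified here from irreducibility and positive recurrence of $X(t)$ on all of $E$ (or from the exponential decay of $\mathbb{P}^x(T>t)$ in the finite-$W$ setting), rather than from non-explosivity plus non-absorption alone, but this does not affect the argument.
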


\begin{proof}The first part of 1 and 2 was shown in Theorem~\ref{thm:QSD-spectral}. For 
the rest of the proof 
see~\cite{QSD}.
\end{proof}

\section{The CTMC ParRep Method}
\subsection{Formulation of the CTMC Algorithm}\label{sec:CTMC}

In this section, we introduce a 
method for accelerating the computation 
of $\pi(f)$, where we recall 
$f: E \to \mathbb{R}$ is any bounded function and $\pi$ is the stationary distribution. 
We call this algorithm CTMC ParRep, 
for reasons that will be outlined below.
Before we describe CTMC ParRep, we introduce some notation. 
Throughout, $X^1(t), \ldots, X^R(t)$ 
will be independent processes with 
the same law as $X(t)$ and 
with initial distributions supported in $W$. Recall that the first exit time 
of $X(t)$ from $W$ is 
\begin{equation*}
T = \inf\{t>0:X(t) \notin W\}.
\end{equation*}
Similarly, for $r=1,\ldots,R$, we define the 
first exit time of $X^r(t)$ from $W$ 
by
\[
T^r =\inf\{t>0: X^r(t) \notin W\}
\] 
and the smallest one among them by
 \[
 T^*=\min_r T^r.
 \]
We denote the index of the replica with the
first exit time $T^*$ by $M$, i.e.,
\[
M = \argmin_r T^r.
\] 
$T$, $T^r$, $T^*$ and $M$ 
depend on $W$, but we do not 
make this explicit. 
 
We are in the position to present the CTMC ParRep in Algorithm~\ref{alg:CTMC-ParRep}.
In this algorithm, we will need user-chosen parameters 
$t_c$ associated with 
each metastable set $W$. Roughly speaking, these parameters correspond to the 
time for $X(t)$ to converge to the QSD in $W$. 
The accumulated value $F(f)_{\mathrm{sim}}$ serves as a quantity that approximates the integral
$\int_0^{T_{\mathrm{end}}} f(X(s))\,ds$ when the algorithm terminates.
Note that at the end of the algorithm we often have $T_{\mathrm{sim}} \geq T_{\mathrm{end}}$ since the ParRep process could reach $T_{\mathrm{end}}$ during the parallel stage. However, this is not an issue as long as $T_{\mathrm{sim}}$ is large enough at the end of the algorithm so that the time average is well approximated.

\begin{algorithm}[ht!]
\begin{algorithmic}[1]
\caption{CTMC ParRep}
\label{alg:CTMC-ParRep}
\STATE
Set a decorrelation threshold $t_c$ for each metastable set $W$. Initialize the 
simulation time clock $T_{\mathrm{sim}}=0$ and the accumulated value $F(f)_{\mathrm{sim}}=0$. We 
will write $X^{\mathrm{par}}(t)$ for a 
simulation process that obeys the 
law of $X(t)$. A complete ParRep cycle consists of three stages. 

\STATE
{\bf Decorrelation Stage} :
    Starting at $t = T_{\mathrm{sim}}$, evolve $X^{\mathrm{par}}(t)$ until it spends an interval of the time length $t_c$ 
    inside the same metastable set $W$. That 
    is, evolve $X^{\mathrm{par}}(t)$ from 
    time $t=T_{\mathrm{sim}}$ until time
\begin{equation*}
T_{\mathrm{corr}} = \inf\{t\ge T_{\mathrm{sim}}+t_c: X^{\mathrm{par}}(s) \in W~\mathrm{for~all}~s\in [t-t_c, t]~\mathrm{for~some}~W\}.
\end{equation*}
Then update 
\begin{equation*}
F(f)_{\mathrm{sim}} = F(f)_{\mathrm{sim}}+ \int_{T_{\mathrm{sim}}}^{T_{\mathrm{corr}}} f(X^{\mathrm{par}}(t))\,dt,
\end{equation*} 
set $T_{\mathrm{sim}} = T_{\mathrm{corr}}$, 
and proceed to the dephasing stage.

\STATE
{\bf Dephasing Stage} : Let $W$ be 
such that $X^{\mathrm{par}}(T_{\mathrm{sim}}) \in W$, 
that is, $W$ is the metastable set from the end of 
the last decorrelation stage. 
Generate $R$ independent samples $x_1, \ldots, x_R$ from $\nu$, the QSD of $X(t)$ in $W$. 

Then proceed to the parallel stage.  
\STATE
{\bf Parallel Stage} : Start $R$ parallel processes $X^1(t), \ldots, X^R(t)$ 
at $x_1, \ldots,x_R$, and evolve them simultaneously from time $t=0$ until time $T^* = \min_r T^r$.
Here all $R$ processes are simulated in parallel.
Then update
\begin{equation}\label{eqn:CTMC-parallel-stage}
\begin{split}
     &F(f)_{\mathrm{sim}} = F(f)_{\mathrm{sim}} + \sum_{r=1}^{R} \int_{0}^{T^*} f(X^r(s)) ds,\\
     &T_{\mathrm{sim}}= T_{\mathrm{sim}}+RT^*,
\end{split}
\end{equation}
set $X^{\mathrm{par}}(T_{\mathrm{sim}}) = X^{M}(T^*)$, and return to the decorrelation stage. 
\STATE
The algorithm is stopped when $T_{\mathrm{sim}}$ reached 
a user-chosen terminal time $T_{\mathrm{end}}$. 

The stationary average $\pi(f)$ is estimated as 
\begin{equation*}\label{erg_approx}
\pi(f) \approx F(f)_{\mathrm{sim}}/T_{\mathrm{sim}}. 
\end{equation*}
\end{algorithmic}
\end{algorithm}


If $X^{\mathrm{par}}(t)$ remains in $W$ for sufficiently long time (i.e., decorrelation threshold $t_c$), it is distributed nearly 
according to the QSD $\nu$ of $X(t)$ in $W$ by Theorem~\ref{thm:QSD-spectral}. This means 
that at the end of the decorrelation 
stage, $X^{\mathrm{par}}(T_{\mathrm{sim}})$ can 
be considered a sample of $\nu$.

The aim of the dephasing stage is to prepare a sequence of independent initial states with distribution $\nu$. 
There are several ways for achieving this. 
Perhaps the simplest is the rejection method. 
In this procedure, each of the $R$ replicas evolves independently. 
A parameter $t_p$ similar to the decorrelation threshold $t_c$ is selected. 
If a replica leaves $W$ before spending 
a time interval of length $t_p$ 
in $W$, it restarts in $W$ from the original initial state. 
Once all the replicas remain in $W$ for time $t_p$, 
we stop and take $x_1,\ldots,x_R$ as 
the final states of all the replicas in the dephasing stage and use them for the subsequent 
parallel stage.
Besides rejection sampling, another method is a Fleming-Viot 
based particle sampler; see 
the discussion after
Algorithm~\ref{alg:embedded-ParRep}
below.
Finally, we comment that  we can reduce the overhead related to the dephasing stage by starting the dephasing stage immediately (instead of waiting for decorrelation stage to finish) when the $X^{\text{Par}}(t)$ enters into a new metastable set \cite{binder2015generalized}.

The acceleration of CTMC ParRep comes from the parallel stage. 
Recall that, for each $r=1,\ldots,R$, if $x_1,\ldots,x_R$ are independent, identically distributed (iid) with the
common distribution $\nu$, then $T^1,\ldots,T_R$ are independent exponential random variables with common parameter $\lambda_1$. 
Using $T^* = \min_{r} T^r$, it is then 
easy to check that $RT^*$ has the same distribution
as $T^1$. See Lemma~\ref{lem:exp} below.  
This means one only needs to wait for $T^*$
instead of $T^1$ to observe an 
exit from $W$. Note 
that this is true whether or 
not $W$ is metastable, so 
efficiency of the parallel stage does 
not require metastability. 
However, the dephasing stage
is not efficient if $W$ is 
not metastable. That is because, 
in practice, the samples 
$x_1,\ldots,x_R$ are obtained 
by simulating trajectories 
which remain in $W$ for 
a sufficiently long time $t_p$. 
Such samples are hard to 
obtain when the typical time $t_p$ 
for $x_1,\ldots,x_R$ 
to reach the QSD in $W$ is 
not much smaller than the 
typical time to leave $W$.

To see that each parallel stage has a consistent contribution to the stationary average, we make the following two observations. Suppose that $x_1,\ldots,x_R$ 
are iid samples from $\nu$.
\begin{itemize}
\item [1.] The joint law of $(RT^*, X^M(T^*))$ is the same as that of $(T^1, X(T^1))$. That is, the joint distribution of the first exit time and the exit state in the parallel stage is independent of the number of replicas.
\item [2.] The expected value of $\sum_{r=1}^{R} \int_{0}^{T^*} f(X^r(s)) ds$ in \eqref{eqn:CTMC-parallel-stage} is the same as that of $\int_0^{T^1} f(X^1(s)) ds$.
That is, the expected contribution to $F(f)_{\mathrm{sim}}$ from each parallel stage is independent of the number of replicas.
\end{itemize}
The first observation is a consequence of the Theorem~\ref{thm:QSD-property}, and the second will be proved in Theorem~\ref{thm:CTMC-parallel-unbiased} 
below. Consistency of stationary 
averages follows from the points 1-2 above and 
the law of large numbers. Since 
there are indefinitely many parallel stages in a given $W$, consistency is ensured as long as the
expected contribution to $F(f)_{\mathrm{sim}}$ 
from the parallel stage has 
the correct expected value.
See~\cite{ParRep-stationary} for 
details and discussion in a related 
discrete time version of the algorithm under some idealized assumptions.



The CTMC ParRep algorithm suffers some 
serious drawbacks. Even if the 
parallel 
processors are synchronous, $M$ and $T^*$ may 
not be known at the wall clock time 
when the first 
replica leaves $W$. 
The reason is that the holding times for a CTMC are random, while the wall clock time for simulating each jump of the CTMC is always roughly the same.
We illustrate this problem in Figure~\ref{fig:nonuniformity}.
\begin{figure}[h]
\centering
\includegraphics[scale=0.8]{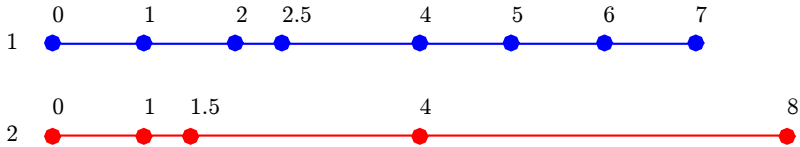}
\caption{The parallel stage of the CTMC ParRep algorithm with two replicas. $R1$ escapes from $W$ at $t = 7$ with $7$ transitions while $R2$ escapes at $t=8$ but with only $4$ transitions. In the parallel stage of the CTMC ParRep algorithm, $R2$ escaped from $W$ before $R1$ does but $T^2 > T^1$. There is no acceleration in this case since the parallel stage does not terminate when $R2$ escapes.}
\label{fig:nonuniformity}
\end{figure}
In the worst possible case, in order to determine $M$ and $T^*$, we 
must wait for all the replicas to leave $W$. 
However, one can set a variable $T_{\mathrm{min}}$ to record the current minimum first exit time over all replicas which have left $W$, and terminate any replicas which reach time $T_{\mathrm{min}}$ but have not left $W$, since no 
replica contributes to the accumulated 
value past time $T_{\mathrm{min}}$.  
Since the expected first exit times $\mathbb{E}[T^r], r=1,\ldots, R$ are roughly the same, if the variance in the number of jumps of $X^r(t)$ before 
time $T^*$ is small for all $r=1,\ldots,R$, then we can expect that the parallel stage stops after only a few replicas leave $W$.

For the same reason, there is another major drawback of CTMC ParRep. If $f$ takes multiple values in $W$, then the computation of 
$\sum_{r=1}^{R} \int_0^{T^*} f(X^r(s)) ds$ in \eqref{eqn:CTMC-parallel-stage} requires storing the entire history of the value of f on each replica in that parallel stage. 
We illustrate this drawback by considering the case of two replicas $r_1$ and $r_2$ with first exit time $T^1$ and $T^2$, respectively.
Suppose $T^1 < T^2$ and hence $T^* =  T^1$. Let us assume that in terms of wall clock time,  $r_2$ exits $W$ before $r_1$ does.
At the end of the parallel stage (i.e., after both of $T^1$ and $T^2$ are sampled ) we have the running sum $\int_0^{T^2} f(X^2(s)) ds$ from $r_2$.
However,  by the CTMC ParRep algorithm,  we only need $\int_0^{T^*} f(X^2(s)) ds$ indeed. 
If we only keep track of the running sum, we are unable to recover $\int_0^{T^*} f(X^2(s)) ds$ from $T^2$. 
Hence, the implementation of the CTMC ParRep might be memory demanding unless one is interested in the equilibrium average 
of a metastable-set invariant function $f$, i.e., if $f(x)$ has only one value in each metastable set $W$. In 
Section~4 we present 
another algorithm, called embedded ParRep, which addresses these drawbacks.


\subsection{Error Analysis of CTMC ParRep}

Here and below 
we will write 
${\mathbb E}^{\nu^R}$ for the 
expectation of $(X^1(t),\ldots,X^R(t))$ 
starting at $\nu^R$, where for
\begin{equation*}
\nu^R(x_1,\ldots,x_R) = \prod_{r=1}^R \nu(x_r), \qquad x_1,\ldots,x_R \in W.
\end{equation*}
We begin with a simple well known lemma.
\begin{lemma}\label{lem:exp}
Suppose $T^1,\ldots,T^R$ are iid exponential
random variables with the parameter 
$\lambda_1$. Then $T^* = \min_{1 \le r \le R} T^r$ is exponentially distributed 
with the parameter $R\lambda_1$. In particular, 
$RT^*$ has the same distribution as $T^1$.
\end{lemma}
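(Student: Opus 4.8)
The plan is to compute everything at the level of tail (survival) functions, since the minimum of independent variables factorizes cleanly there and an exponential law is determined by its tail alone. Recall the paper's sign convention, in which $\lambda_1 < 0$ and ``exponential with parameter $\lambda_1$'' means $\mathbb{P}(T^r > t) = e^{\lambda_1 t}$ for every $t > 0$ and each $r$; this is exactly the form supplied by Theorem~\ref{thm:QSD-property}.

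First I would observe that because $T^* = \min_{1 \le r \le R} T^r$, the event $\{T^* > t\}$ holds precisely when every $T^r$ exceeds $t$, i.e. $\{T^* > t\} = \bigcap_{r=1}^R \{T^r > t\}$. Invoking independence of $T^1,\ldots,T^R$ and then their common tail, I would compute
\[
\mathbb{P}(T^* > t) = \prod_{r=1}^R \mathbb{P}(T^r > t) = \left(e^{\lambda_1 t}\right)^R = e^{R\lambda_1 t}, \qquad t > 0.
\]
This is the tail function of an exponential variable with parameter $R\lambda_1$, and since a distribution on $(0,\infty)$ is uniquely determined by its tail, $T^*$ is exponential with parameter $R\lambda_1$.

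For the final claim I would simply rescale. Using $\mathbb{P}(RT^* > t) = \mathbb{P}(T^* > t/R)$ together with the tail just computed gives
\[
\mathbb{P}(RT^* > t) = e^{R\lambda_1 (t/R)} = e^{\lambda_1 t} = \mathbb{P}(T^1 > t), \qquad t > 0,
\]
so $RT^*$ and $T^1$ share the same tail function and hence coincide in law. There is no genuine obstacle here: the only points requiring care are the sign convention on $\lambda_1$ (so that the expressions above are legitimate probabilities) and the standard fact that agreement of tail functions on $(0,\infty)$ forces agreement in distribution, which I would cite rather than reprove.
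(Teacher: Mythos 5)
Your proof is correct, and it is the canonical argument: the paper itself states this as a ``simple well known lemma'' and supplies no proof, so the intended justification is precisely the tail-function computation $\mathbb{P}(T^*>t)=\prod_{r=1}^R\mathbb{P}(T^r>t)=e^{R\lambda_1 t}$ followed by the rescaling $\mathbb{P}(RT^*>t)=\mathbb{P}(T^*>t/R)=e^{\lambda_1 t}$. Your explicit attention to the sign convention ($\lambda_1<0$, consistent with Theorem~\ref{thm:QSD-property}) is a sensible clarification of the paper's slightly loose phrasing and introduces no gap.
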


We now show that if the 
dephasing sampling is exact, then 
the expected contribution to the accumulated 
value $F(f)_{\mathrm{sim}}$ from the parallel 
step of Algorithm~\ref{alg:CTMC-ParRep} 
is exact. 
\begin{theorem}\label{thm:CTMC-parallel-unbiased}
Suppose in the dephasing step
$(x_1,\ldots,x_R) \sim \nu^R$. Then the expected contribution to $F(f)_{\mathrm{sim}}$ from the parallel stage 
of Algorithm~\ref{alg:CTMC-ParRep} is 
independent of the number of replicas,
\[\mathbb{E}^{\nu^R}\left[\sum_{r=1}^{R}\int_0^{T^*} f(X^r(s)) ds\right] = 
{\mathbb E}^\nu\left[\int_0^T f(X(s))ds\right] = \nu(f)\mathbb{E}^{\nu}[T].\] 
\end{theorem}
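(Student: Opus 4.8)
The plan is to reduce both claimed equalities to a single elementary identity about the process started at the QSD, namely that
\begin{equation*}
\mathbb{E}^\nu[f(X(s))\mathbbm{1}_{\{T>s\}}] = \nu(f)\,e^{\lambda_1 s}, \qquad s>0.
\end{equation*}
I would establish this first. By the defining property \eqref{eqn:QSD-CTMC} of the QSD together with Theorem~\ref{thm:QSD-property}, for each $y\in W$ we have $\mathbb{P}^\nu(X(s)=y,\,T>s) = \mathbb{P}^\nu(X(s)=y\mid T>s)\,\mathbb{P}^\nu(T>s) = \nu(y)\,e^{\lambda_1 s}$; multiplying by $f(y)$ and summing over $y\in W$ (boundedness of $f$ justifies the interchange) gives the identity. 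Since $\mathbb{E}^\nu[T] = -1/\lambda_1$ is the mean of the exponential exit law from Theorem~\ref{thm:QSD-property}, this already yields the rightmost equality: writing the integral with an indicator and applying Tonelli,
\begin{equation*}
\mathbb{E}^\nu\!\left[\int_0^T f(X(s))\,ds\right] = \int_0^\infty \mathbb{E}^\nu[f(X(s))\mathbbm{1}_{\{T>s\}}]\,ds = \nu(f)\int_0^\infty e^{\lambda_1 s}\,ds = \nu(f)\,\mathbb{E}^\nu[T].
\end{equation*}

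For the left equality I would exploit that the replicas are independent and identically distributed. By symmetry each summand has the same expectation, so it suffices to compute $R\,\mathbb{E}^{\nu^R}[\int_0^{T^*} f(X^1(s))\,ds]$. Using $\{T^*>s\} = \bigcap_{r} \{T^r>s\}$, I would rewrite the integral as $\int_0^\infty f(X^1(s))\prod_{r=1}^R \mathbbm{1}_{\{T^r>s\}}\,ds$ and again interchange integral and expectation. Independence of $X^1,\ldots,X^R$ then factorizes the integrand: the replica-$1$ factor is $\mathbb{E}^\nu[f(X^1(s))\mathbbm{1}_{\{T^1>s\}}] = \nu(f)e^{\lambda_1 s}$ by the identity above, while each of the remaining $R-1$ factors is $\mathbb{P}^\nu(T^r>s) = e^{\lambda_1 s}$. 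Hence the integrand equals $\nu(f)\,e^{R\lambda_1 s} = \nu(f)\,\mathbb{P}^{\nu^R}(T^*>s)$, so
\begin{equation*}
\mathbb{E}^{\nu^R}\!\left[\sum_{r=1}^R\int_0^{T^*} f(X^r(s))\,ds\right] = R\,\nu(f)\int_0^\infty e^{R\lambda_1 s}\,ds = R\,\nu(f)\,\mathbb{E}^{\nu^R}[T^*] = \nu(f)\,\mathbb{E}^\nu[T],
\end{equation*}
where the last step uses $R\,\mathbb{E}^{\nu^R}[T^*] = \mathbb{E}^\nu[T^1] = \mathbb{E}^\nu[T]$ from Lemma~\ref{lem:exp}. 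This matches the two expressions computed above, completing the chain of equalities.

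The computation is short once the QSD identity is in hand; the step I would be most careful about, and the only genuinely technical one, is justifying the interchange of the time integral and the expectation. This is legitimate because $f$ is bounded and $T$ (equivalently $T^*$) has finite mean, so $\int_0^\infty |f(X(s))|\mathbbm{1}_{\{T>s\}}\,ds \le \|f\|_\infty\, T$ is integrable and Tonelli applies; the same bound with $T$ replaced by $T^*$ controls the replica version. I would also note explicitly that every survival and conditional-distribution statement used is exactly one furnished by Theorem~\ref{thm:QSD-property} and the QSD definition, so the result requires no assumptions beyond those already in force.
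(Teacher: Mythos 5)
Your proposal is correct and follows essentially the same route as the paper's proof: interchange the time integral with the expectation, use the QSD property to identify $\mathbb{E}[f(X^r(s))\mid T^*>s]=\nu(f)$ via independence of the replicas and the fact that $\{T^*>s\}=\bigcap_r\{T^r>s\}$, and conclude with Lemma~\ref{lem:exp}. The only cosmetic difference is that you make the exponential survival law $e^{\lambda_1 s}$ explicit where the paper works generically with $\mathbb{P}^{\nu}(T>s)$, and you are more careful about justifying the Tonelli step; neither changes the substance of the argument.
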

\begin{proof}
First we consider the case with a single replica. 
We condition on the exit time $T^1$ and write
\[
\mathbb{E}^{\nu}\left[\int_0^{T^1} f(X^1(s)) ds\right] = \int_0^{\infty}\mathbb{E}^{\nu}\left[\left.\int_0^t f(X^1(s)) ds \right| T^1 = t\right] \mathbb{P}^{\nu}(T^1 \in dt).
\]
Interchanging the two integrals of the right-hand side leads to
\[
\int_0^{\infty}\int_s^{\infty}\mathbb{E}^{\nu}\left[ f(X^1(s)) | T^1 = t\right] \mathbb{P}(T^1 \in dt) ds.
\]
Note that the inner integral can be written as $\mathbb{E}^{\nu}\left[f(X^1(s)) \mathbbm{1}_{T^1 >s}\right]$ and hence 
\[
\mathbb{E}^{\nu}\left[\int_0^{T^1} f(X^1(s)) ds\right]=\int_0^{\infty} \mathbb{E}^{\nu}\left[f(X^1(s))| T^1 >s\right]\mathbb{P}^{\nu}(T^1 > s) ds.
\]
Owing to the definition of QSD and the fact that $\mathbb{E}^{\nu}[T^1] = \int_0^{\infty} \mathbb{P}^{\nu}(T^1 >s) ds$, 
\[
\mathbb{E}^{\nu}\left[\int_0^{T^1} f(X^1(s)) ds\right]=\nu(f) \mathbb{E}^{\nu}[T^1].
\]
In the case of multiple replicas, similar steps can be used to show that 
\[
\sum_{r=1}^{R}\mathbb{E}^{\nu^R}\left[\int_0^{T^*} f(X^r(s)) ds\right]=\sum_{r=1}^{R}\int_0^{\infty} \mathbb{E}^{\nu^R}\left[f(X^r(s))| T^* >s\right]\mathbb{P}^{\nu^R}(T^* > s) ds.
\] 
Recall that $T^* > s$ if and only if $T^r >s$ for all $r=1, \ldots, R$. 
Using this, the fact that $T^1,\ldots,T^r$ are independent, and the definition of the QSD, we get 
\[ 
\mathbb{E}^{\nu^R}\left[f(X^r(s))| T^* >s\right] =  \mathbb{E}^{\nu}\left[f(X^r(s))| T^r >s\right] = \nu(f).
\] 
Thus 
\[
\sum_{r=1}^{R}\mathbb{E}^{\nu^R}\left[\int_0^{T^*} f(X^r(s)) ds\right]
= \nu(f)\sum_{r=1}^R \int_0^\infty {\mathbb P}^{\nu^R}(T^*>s)ds = \nu(f)R{\mathbb E}^{\nu^R}[T^*]. 
\]
Finally, the result follows from Lemma~\ref{lem:exp}.
\end{proof}

The purpose of CTMC ParRep is to efficiently simulate very long trajectories of a metastable CTMC and estimate the equilibrium average $\pi(f)$. 
CTMC ParRep can produce 
accelerated dynamics of the CTMC 
on a coarse state space where 
each coarse set corresponds to some 
$W$; see the discussion below
Algorithm~\ref{alg:embedded-ParRep} below.
Our numerical experiments suggest that CTMC ParRep (and also embedded ParRep described below) are consistent 
for estimating the stationary distribution.

For CTMC ParRep, we justify this claim
in Theorem~\ref{thm:CTMC-F} below, 
which shows that, starting 
in some $W$ and waiting until the 
simulation leaves $W$, the error for a complete ParRep cycle 
in CTMC ParRep compared to 
direct (serial) simulation vanishes 
as $t_c$ increases. See 
Theorem~\ref{thm:embedded-F} below 
for the analogous result on embedded 
ParRep. 
We note that each ParRep cycle 
produce an error in the estimation of 
stationary averages that does not disappear as $T_{\mathrm{sim}} \to \infty$.  
However, we expect that the error vanishes as the thresholds $t_c = t_p \to \infty$. 
Study of the this error is more involved and will be the focus of another work. 

Recall we have assumed 
convergence of $\|\nu_{t_c}-\nu\|_{\mathrm{TV}} \to 0$ as $t_c \to \infty$, for 
every starting point $x \in E$, where 
$\|\cdot\|_{\mathrm{TV}}$ denotes total variation 
norm. See for instance~Theorem~\ref{thm:QSD-spectral} for conditions guaranteeing 
this convergence.


\begin{theorem}\label{thm:CTMC-F}
Consider CTMC ParRep 
starting at $x \in W$ in the 
decorrelation stage.
Assume the dephasing stage sampling is exact, that is, $(x_1,\ldots,x_R) \sim \nu^R$. 
Consider the expected contribution to $F(f)_{sim}$ 
until the first 
time the simulation 
leaves $W$ (either in the decorrelation or in the parallel stage),
\begin{equation*}
\Delta F(f)_{sim} \triangleq \mathbb{E}^x\left[\int_0^{t_c \wedge T} f(X(s))\, ds\right] + 
\mathbb{E}^{x,\nu^R}\left[\mathbbm{1}_{T > t_c} \sum_{r=1}^R\int_0^{T^*} f(X^r(s))ds\right],
\end{equation*} 
where $\mathbb{E}^{x,\nu^R}$ 
denotes expectation for $(X(t), X^1(t), \ldots, X^R(t))$ with $X(t)$ starting at $x$ and the replicas $(X^1(t), \ldots,X^R(t))$ starting at initial distribution $\nu^R$. The error compared to direct (serial) simulation 
satisfies the bound 
\begin{equation}\label{CTMC_error_one_cycle}
\left|\mathbb{E}^x\left[\int_0^T f(X(s)) ds\right]-\Delta F(f)_{sim}
 \right|  \leq \|f\|_{\infty}\sup_{x \in W} \mathbb{E}^x\left[T\right] \|\nu_{t_c} - \nu\|_{\mathrm{TV}}.
\end{equation}
\end{theorem}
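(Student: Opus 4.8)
The plan is to split the serial target at the decorrelation time $t_c$ and match it piece-by-piece against the two parts of $\Delta F(f)_{\mathrm{sim}}$. Writing
\[
\mathbb{E}^x\left[\int_0^T f(X(s))\,ds\right] = \mathbb{E}^x\left[\int_0^{t_c\wedge T} f(X(s))\,ds\right] + \mathbb{E}^x\left[\mathbbm{1}_{T>t_c}\int_{t_c}^T f(X(s))\,ds\right],
\]
I observe that the first term is identical to the decorrelation contribution in $\Delta F(f)_{\mathrm{sim}}$ and cancels exactly. The entire error therefore reduces to the discrepancy between the serial tail $\mathbb{E}^x[\mathbbm{1}_{T>t_c}\int_{t_c}^T f\,ds]$ and the parallel contribution $\mathbb{E}^{x,\nu^R}[\mathbbm{1}_{T>t_c}\sum_{r}\int_0^{T^*}f(X^r)\,ds]$. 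It is convenient to introduce $g(y) := \mathbb{E}^y[\int_0^T f(X(s))\,ds]$ for $y\in W$, so that both sides can be expressed through this single functional.

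For the parallel term, I would use that the decorrelation trajectory $X(t)$, which determines the indicator $\mathbbm{1}_{T>t_c}$, is independent of the replicas $(X^1,\ldots,X^R)$ started at $\nu^R$; the expectation then factors and Theorem~\ref{thm:CTMC-parallel-unbiased} identifies the replica factor, giving $\mathbb{P}^x(T>t_c)\,\nu(g)$ where $\nu(g)=\mathbb{E}^\nu[\int_0^T f\,ds]$. For the serial tail, I would apply the Markov property at time $t_c$: conditioned on $\{T>t_c\}$ the state $X(t_c)$ has law $\nu_{t_c}$ (the conditioned distribution of \eqref{eqn:CTMC-Yaglom}), and the continuation is a fresh copy of the chain, so the tail equals $\mathbb{P}^x(T>t_c)\,\nu_{t_c}(g)$. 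Subtracting, the error is exactly
\[
\left|\mathbb{E}^x\left[\int_0^T f\,ds\right]-\Delta F(f)_{\mathrm{sim}}\right| = \mathbb{P}^x(T>t_c)\,\bigl|\nu_{t_c}(g)-\nu(g)\bigr|.
\]

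To close, I would bound $\mathbb{P}^x(T>t_c)\le 1$ and use the definition of total variation to get $|\nu_{t_c}(g)-\nu(g)|\le \|g\|_\infty\,\|\nu_{t_c}-\nu\|_{\mathrm{TV}}$, then control $\|g\|_\infty$ via $|g(y)|\le\|f\|_\infty\,\mathbb{E}^y[T]\le \|f\|_\infty\sup_{x\in W}\mathbb{E}^x[T]$, which yields the stated estimate. The main obstacle is justifying the two representations cleanly: the factorization for the parallel term hinges on the independence of the decorrelation trajectory and the exactly $\nu^R$-distributed replicas together with Theorem~\ref{thm:CTMC-parallel-unbiased}, while the serial-tail identity requires the Markov property and the identification of the conditional law of $X(t_c)$ on $\{T>t_c\}$ with $\nu_{t_c}$. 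Once both contributions are written as $\mathbb{P}^x(T>t_c)$ times the expectation of the same $g$ under $\nu_{t_c}$ versus $\nu$, the whole discrepancy is seen to come solely from imperfect decorrelation, and the total-variation estimate finishes the argument.
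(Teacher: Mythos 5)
Your proposal is correct and follows essentially the same route as the paper's proof: cancel the decorrelation terms, use independence plus Theorem~\ref{thm:CTMC-parallel-unbiased} to reduce the parallel contribution to $\mathbb{P}^x(T>t_c)\,\nu(g)$, use the Markov property to write the serial tail as $\mathbb{P}^x(T>t_c)\,\nu_{t_c}(g)$, and finish with the total-variation bound on $g(y)=\mathbb{E}^y[\int_0^T f\,ds]$. The only cosmetic difference is that you name the functional $g$ explicitly where the paper writes out the sum over $x\in W$.
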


\begin{proof}
We estimate
\begin{equation*}
\begin{split}
&\left|\mathbb{E}^x\left[\int_0^T f(X(s)) ds\right]-\Delta F(f)_{\mathrm{sim}}\right|\\
=& \left|\mathbb{E}^x\left[\int_{t_c \wedge T}^T f(X(s)) ds\right] - {\mathbb E}^{x,\nu^R}\left[\mathbbm{1}_{T>t_c}\sum_{r=1}^R\int_0^{T^*} f(X^r(s))ds\right]\right|\\
=&\left|\left.\mathbb{E}^x\left[\int_{t_c}^{T} f(X(s)) ds \,\right|\,{T>t_c}\right] - {\mathbb E}^{x,\nu^R}\left.\left[\sum_{r=1}^R\int_0^{T^*} f(X^r(s))ds\right|\,{T > t_c}\right]\right| \mathbb{P}^x(T>t_c)\\
\le& \left|\left.\mathbb{E}^x\left[\int_{t_c}^{T} f(X(s)) ds \,\right|\,{T>t_c}\right] -{\mathbb E}^{\nu^R}\left[\sum_{r=1}^R\int_0^{T^*} f(X^r(s))ds\right]\right|,
\end{split}
\end{equation*}
where we used the fact that $X(t)$ and the replicas $(X^1(t), \ldots, X^R(t))$ are independent.
By the Markov property, 
\begin{equation*}
\left.\mathbb{E}^x\left[\int_{t_c}^{T} f(X(s)) ds \,\right|\,{T>t_c}\right] = \mathbb{E}^{\nu_{t_c}}\left[\int_0^T f(X(s)) ds\right].
\end{equation*}
By Theorem~\ref{thm:CTMC-parallel-unbiased},
\[ {\mathbb E}^{\nu^R}\left[\sum_{r=1}^R\int_0^{T^*} f(X^r(s))ds\right]
={\mathbb E}^\nu\left[\int_0^T f(X(s))\,ds\right].
\]
Combining the above estimates and equalities, 
\begin{align*}
&\left|\mathbb{E}^x\left[\int_0^T f(X(s)) ds\right]-\Delta F(f)_{sim}\right| \\
\le & \left|\mathbb{E}^{\nu_{t_c}}\left[\int_0^T f(X(s)) ds\right]-{\mathbb E}^\nu\left[\int_0^T f(X(s))\,ds\right]\right| \\
= & \left|\sum_{x\in W} \mathbb{E}^x\left[\int_0^T f(X(s)) ds\right] \nu_{t_c}(x) - \sum_{x \in W} \mathbb{E}^x\left[\int_0^{T} f(X(s))ds\right]\nu(x)\right|\\
\leq & \|f\|_{\infty}\sup_{x \in W} \mathbb{E}^x\left[T\right] \|\nu_{t_c} - \nu\|_{\mathrm{TV}}.
\end{align*}
\end{proof}

We  
note that $\mathbb{E}^x[T]$ is uniformly bounded in $x\in W$ if, for instance, 
$P_W$ is irreducible and 
$W$ is finite and non-absorbing for $X(t)$, as in Theorem~\ref{thm:QSD-spectral}. 
This uniform boundedness guarantees 
that the right hand side of~\eqref{CTMC_error_one_cycle} vanishes as $t_c \to \infty$.

\section{The Embedded ParRep Method}

\subsection{Formulation of the Embedded ParRep Algorithm}
In this section, we introduce 
another algorithm for accelerating 
the computation of $\pi(f)$. 
The algorithm, called embedded ParRep, 
circumvents the disadvantages 
of CTMC ParRep discussed above.
As mentioned in the previous section,  CTMC ParRep can be slow due to the randomness of the holding times. 
In the worst case, one has to wait until all replicas leave $W$ in order to determine the first exit time $T^*$.
To circumvent this issue we propose an 
algorithm based on the embedded 
chain in which the parallel stage terminates as soon as one of the replicas leaves $W$.

Before we describe embedded ParRep, 
we introduce some notations. 
Throughout, $X_n^1,\ldots,X_n^R$ 
will be independent processes with 
the same law as $X_n$ and with initial 
distributions supported in~$W$. 
Moreover, we consider $X_n^1,\ldots,X_n^R$ as the embedded chains 
of $X^1(t),\ldots,X^r(t)$ defined 
above, and let
$\Delta \tau_n^1,\ldots,\Delta\tau_n^R$ be the corresponding holding 
times.
Recall that the first exit time of $X_n$ from $W$ is 
\[
N = \inf\{n > 0\,:X_n \notin W\}.
\]
For $r =1,\ldots,R$, we define the first exit time of $X_n^r$ from $W$ by
\[
N^r = \min\{n \in \mathbb{N}; X_n^r \notin W\} 
\]
and the smallest among them by 
\[
N^* = \min \{N^r; r=1,\ldots, R\}.
\] 
Note that it is possible that more than one replica leave $W$ for the 
first time after $N^*$ transitions. We denote by $K$ the smallest index among these escaped replicas. 
That is,
\[
K = \min\{r=1, \ldots, R ; X_{N^*}^r \notin W\}.
\]
It is clear from the above definition that $N^K = N^*$. Of course $N$, $N^r$, $N^*$ 
and $K$ depend on $W$, but we do not 
make this explicit.

Here and below we write ${\mathbb E}^{\mu^R}$ for expectation of 
$(X_n^1,\ldots,X_n^R)$ starting 
at $\mu^R$, where
\begin{equation*}
\mu^R(x_1,\ldots,x_R) = \prod_{r=1}^R \mu(x_r), \qquad x_1,\ldots,x_R \in W.
\end{equation*}
We begin by 
reproducing from~\cite{ParRep-Chain} Theorem~\ref{thm:geometric-dist} 
and~\ref{thm:same-dist} below, with proofs for completeness. 
\begin{theorem}\label{thm:geometric-dist}
Suppose $(X_n^1, \ldots, X_n^R)$ has 
initial distribution $\mu^R$.
Then $R(N^K-1) + K$ has the same distribution as $N^1$. 
\end{theorem}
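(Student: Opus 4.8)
The plan is to reduce the statement to an elementary fact about independent geometric random variables and then compute a joint law directly. First I would invoke Theorem~\ref{thm:QSD-property}(2): since each replica $X_n^r$ starts from the QSD $\mu$, its first exit time $N^r$ is geometric with $\mathbb{P}^{\mu}(N^r>n)=\sigma_1^n$, and because the replicas are independent, $(N^1,\ldots,N^R)$ are i.i.d.\ geometric with parameter $1-\sigma_1$. Crucially, $R(N^K-1)+K$ depends on the replicas only through $(N^1,\ldots,N^R)$: indeed $N^K=N^*=\min_r N^r$ and $K=\min\{r:N^r=N^*\}$, so the chain structure and the exit states play no role and the problem becomes purely distributional.

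Next I would compute the joint law of $(N^*,K)$. Fix $n\ge 1$ and $1\le k\le R$. The event $\{N^*=n,\,K=k\}$ forces $N^r>n$ for each $r<k$ (these replicas have not yet exited at step $n$), $N^k=n$, and $N^r\ge n$ for each $r>k$. Using $\mathbb{P}(N^r>n)=\sigma_1^n$, $\mathbb{P}(N^r=n)=(1-\sigma_1)\sigma_1^{n-1}$, $\mathbb{P}(N^r\ge n)=\sigma_1^{n-1}$, and independence, I expect
\[
\mathbb{P}^{\mu^R}(N^*=n,\,K=k)=\left(\sigma_1^n\right)^{k-1}(1-\sigma_1)\sigma_1^{n-1}\left(\sigma_1^{n-1}\right)^{R-k}=(1-\sigma_1)\,\sigma_1^{R(n-1)+k-1},
\]
after collecting exponents; the only real bookkeeping is verifying that the exponent telescopes to $R(n-1)+k-1$.

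Finally I would observe that $(n,k)\mapsto m:=R(n-1)+k$ is a bijection from $\{1,2,\ldots\}\times\{1,\ldots,R\}$ onto $\{1,2,3,\ldots\}$, since every positive integer $m$ is uniquely written as $R(n-1)+k$ with $1\le k\le R$. Hence $\mathbb{P}^{\mu^R}(R(N^K-1)+K=m)=(1-\sigma_1)\sigma_1^{m-1}$ for all $m\ge 1$, which is precisely the geometric law of $N^1$, completing the argument.

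The main obstacle, and really the only delicate point, is getting the strict-versus-nonstrict inequalities right in the definition of $K$ as the smallest index attaining the minimum: this dictates that replicas with index below $k$ must \emph{strictly} exceed $n$ while those above $k$ need only be \emph{at least} $n$, and an off-by-one here would spoil the clean telescoping of exponents. Conceptually, the same result follows from an interleaving picture: list the trials in the order $(1,1),(1,2),\ldots,(1,R),(2,1),\ldots$, assigning global index $R(n-1)+r$ to the pair (step $n$, replica $r$); the reindexed sequence is i.i.d.\ Bernoulli$(1-\sigma_1)$, and $R(N^K-1)+K$ is exactly the index of its first success, hence geometric.
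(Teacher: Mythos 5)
Your proof is correct and follows essentially the same route as the paper: both compute the joint law $\mathbb{P}^{\mu^R}(N^*=n,\,K=k)$ by decomposing the event into $N^r>n$ for $r<k$, $N^k=n$, and $N^r\ge n$ for $r>k$, and then use independence and the geometric law from Theorem~\ref{thm:QSD-property} to obtain $(1-\sigma_1)\sigma_1^{R(n-1)+k-1}$. Your explicit mention of the bijection $(n,k)\mapsto R(n-1)+k$ and the care with strict versus non-strict inequalities are welcome clarifications of steps the paper leaves implicit, but the argument is the same.
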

\begin{proof}
Note that for any $n \ge 0$ and $k = 1, \ldots, R$, the event $\{N^K=n, K = k\}$ is equivalent to the event 
$\{N^1 > n, \ldots, N^{k-1} > n, N^k = n, N^{k+1} > n-1, \ldots, N^{R} > n-1\}$. Since $X_n^1,\ldots,X_n^R$ are iid and $N^1$ is geometrically distributed with rate $p=\mathbb{P}^{\mu^R}(N^1>1)$ (see Theorem~\ref{thm:QSD-property}), 
\[
\mathbb{P}^{\mu^R}(N^K = n, K = k) = (1-p)^{n(k-1)}(1-p)^{n-1}p (1-p)^{(n-1)(k-1)} = (1-p)^{R(n-1)+k-1}p.
\]
That is, $R(N^K-1) + K$ has geometric distribution with rate $p$. 
\end{proof}

\begin{theorem}\label{thm:same-dist}
Suppose $(X_n^1, \ldots, X_n^R)$ has 
the initial distribution $\mu^R$. Then
$X_{N^K}^K$ is independent of $R(N^K-1)+K$ and  
the distribution of $(X_{N^K}^{K}, R(N^K-1)+K)$ is same as that of $(X_{N^1}^1, N^1)$.
\end{theorem}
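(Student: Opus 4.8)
The plan is to mimic the power-counting computation in the proof of Theorem~\ref{thm:geometric-dist}, but now carrying along the exit state in addition to the exit time and the index $K$. The starting point is the same event decomposition: for $n\ge 1$ and $1\le k\le R$, the event $\{N^K=n,\,K=k\}$ coincides with
\[
\{N^1>n,\ldots,N^{k-1}>n,\ N^k=n,\ N^{k+1}>n-1,\ldots,N^R>n-1\},
\]
where ties among the exit times are broken by the smallest index, so replicas $j<k$ satisfy $N^j>n$ while replicas $j>k$ only satisfy $N^j>n-1$. On this event one has $X_{N^K}^K=X_n^k$, and since the $R$ replicas are independent and the exit state depends only on replica $k$, I would factor
\[
\mathbb{P}^{\mu^R}\!\left(X_{N^K}^K=y,\,N^K=n,\,K=k\right)
=\prod_{j<k}\mathbb{P}^{\mu}(N^j>n)\cdot\mathbb{P}^{\mu}\!\left(X_{N}^k=y,\,N^k=n\right)\cdot\prod_{j>k}\mathbb{P}^{\mu}(N^j>n-1).
\]

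The essential input is part~2 of Theorem~\ref{thm:QSD-property}: starting from $\mu$, the exit time $N$ is geometric with $\mathbb{P}^{\mu}(N>n)=\sigma_1^n$, and $N$ is independent of $X_N$. Writing $p=1-\sigma_1$, this gives $\mathbb{P}^{\mu}(N>n)=(1-p)^n$ and, by the independence of $N$ and $X_N$, the clean product $\mathbb{P}^{\mu}(X_N^k=y,\,N^k=n)=\mathbb{P}^{\mu}(X_N=y)\,(1-p)^{n-1}p$. Substituting and collecting the powers of $1-p$ exactly as in the proof of Theorem~\ref{thm:geometric-dist}, the exponent telescopes to $R(n-1)+k-1$, yielding
\[
\mathbb{P}^{\mu^R}\!\left(X_{N^K}^K=y,\,N^K=n,\,K=k\right)=\mathbb{P}^{\mu}(X_N=y)\,(1-p)^{R(n-1)+k-1}p.
\]

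The final step is a reparametrization. The map $(n,k)\mapsto m=R(n-1)+k$ is a bijection from $\{n\ge 1\}\times\{1,\ldots,R\}$ onto $\{1,2,\ldots\}$, so each value $m$ of $R(N^K-1)+K$ corresponds to a unique pair $(n,k)$, and hence
\[
\mathbb{P}^{\mu^R}\!\left(X_{N^K}^K=y,\,R(N^K-1)+K=m\right)=\mathbb{P}^{\mu}(X_N=y)\,(1-p)^{m-1}p.
\]
Since the right-hand side factors as a function of $y$ times a function of $m$, I read off both assertions at once: $X_{N^K}^K$ is independent of $R(N^K-1)+K$, and, comparing with the single-replica identity $\mathbb{P}^{\mu}(X_{N^1}^1=y,\,N^1=m)=\mathbb{P}^{\mu}(X_N=y)\,(1-p)^{m-1}p$ (itself an instance of part~2 of Theorem~\ref{thm:QSD-property}), the joint law of $(X_{N^K}^K,\,R(N^K-1)+K)$ coincides with that of $(X_{N^1}^1,N^1)$.

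I expect the only delicate point to be the bookkeeping: confirming that the exit state contributes the single clean factor $\mathbb{P}^{\mu}(X_N=y)$ relies on the independence of $N$ and $X_N$ under the QSD, and one must keep straight that on $\{K=k\}$ the replicas $j<k$ obey the strict inequality $N^j>n$ whereas those $j>k$ obey $N^j>n-1$. Once that decomposition is written correctly, the rest is the same power-counting already performed for Theorem~\ref{thm:geometric-dist}.
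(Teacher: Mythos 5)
Your proof is correct and follows essentially the same route as the paper: both arguments rest on the decomposition of $\{N^K=n,\,K=k\}$ into the events $\{N^j>n\}$ for $j<k$, $\{N^k=n\}$, and $\{N^j>n-1\}$ for $j>k$, on the independence of the replicas, and on the QSD fact (Theorem~\ref{thm:QSD-property}) that under $\mu$ the exit time $N$ is geometric and independent of the exit state $X_N$. The only difference is presentational --- you compute the joint law of $(X_{N^K}^K,N^K,K)$ in a single explicit product formula and read off both the independence and the identification with $(X_{N^1}^1,N^1)$ at once, whereas the paper reaches the same conclusion through a chain of conditional-independence identities before invoking Theorem~\ref{thm:geometric-dist}.
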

\begin{proof}
We first prove that $X_{N^K}^K$ is independent of $K$. Since $X_n^R,\ldots,X_n^R$ are iid and $N^k$ 
is independent of $X_{N^k}^k$ for each $k$, then $X_{N^k}^k$ is independent of $N^1, \ldots, N^R$.
Note that $K \in \sigma(N^1, \ldots, N^R)$, hence $X_{N^k}^k$ is independent of $K$ for each $k$.  
Now observe that for any $A \subset E$, 
\begin{equation*}
\begin{split}
\mathbb{P}^{\mu^R}(X_{N^K}^K \in A) &= \sum_{r=1}^R \mathbb{P}^{\mu^R}(X_{N^k}^k \in A, K = r)  \\
            & = \sum_{r=1}^R \mathbb{P}^{\mu^R}(X_{N^1}^1 \in A) \mathbb{P}^{\mu^R}(K = r)\\
            & = \mathbb{P}^{\mu^R}(X_{N^1}^1 \in A),
\end{split}
\end{equation*}
that is, $X_{N^K}^K$ and $X_{N^1}^1$ are equally distributed.  This implies that $X_{N^K}^K$ is independent of $K$.
To see the independence between $X_{N^K}^K$ and $R(N^K - 1) + K$, note that
\begin{equation*}
\begin{split}
\mathbb{P}^{\mu^R}(X_{N^K}^K \in A, N^K = n, K = r) &= \mathbb{P}^{\mu^R}(X_{N^r}^r \in A, N^r = n, K = r) \\
&= \mathbb{P}^{\mu^R}(X_{N^r}^r \in A, K = r | N^r = n) \mathbb{P}^{\mu^R}(N^r = n)\\
&= \mathbb{P}^{\mu^R}(X_{N^r}^r \in A | N^r = n) \mathbb{P}^{\mu^R}(N^r = n, K = r)\\
&= \mathbb{P}^{\mu^R}(X_{N^r}^r \in A) \mathbb{P}^{\mu^R}(N^r = n, K = r)\\
&= \mathbb{P}^{\mu^R}(X_{N^K}^K \in A) \mathbb{P}^{\mu^R}(N^K = n, K = r)
\end{split}
\end{equation*}
for any measurable $A \subset E$, $n \in \mathbb{Z}^+$ and $r = 1,\ldots, R$. Finally, Theorem~\ref{thm:geometric-dist} and the above analysis imply that 
$(X_{N^K}^{K}, R(N^K-1)+K)$ and $(X_{N^1}^1, N^1)$ are equally distributed. 
\end{proof}

Now we present the embedded ParRep algorithm in Algorithm~\ref{alg:embedded-ParRep}. 
In this algorithm we will need user-chosen parameters 
$n_c$ associated with 
each metastable set $W$. Roughly, these 
parameters correspond to the 
time for $X_n$ to converge 
to the QSD in $W$.

\begin{algorithm}[ht!]
\caption{Embedded ParRep}
\label{alg:embedded-ParRep}
\begin{algorithmic}[1]
\STATE

Set a decorrelation threshold $n_c$ for each metastable set $W$. Initialize the
 simulation time clock $N_{\mathrm{sim}}=0$ and the accumulated value $F(f)_{\mathrm{sim}}=0$. We will 
write $X^{\mathrm{par}}_n$ and $\Delta^{\mathrm{par}}_n$ 
for a DTMC and holding time process following the law of the embedded chain and holding times of $X(t)$ respectively. 
A complete ParRep cycle consists of three stages.
\STATE
 {\bf Decorrelation Stage}:
   Starting at $n = N_{\mathrm{sim}}$, 
   evolve $X^{\mathrm{par}}_n$ and $\Delta \tau^{\mathrm{par}}_n$ until $X^{\mathrm{par}}_n$ spends $n_c$ consecutive time steps inside of the same metastable set $W$. That is, evolve 
   $X^{\mathrm{par}}_n$ and $\Delta \tau^{\mathrm{par}}_n$ from time $n = N_{\mathrm{sim}}$ 
   until time 
   \begin{equation*}
   N_{\mathrm{corr}} = \inf\{n\ge N_{\mathrm{sim}}+n_c-1: X_m^{\mathrm{par}} \in W ~\mathrm{for} ~m \in \{n-n_c+1,\ldots,n\}~\mathrm{for~some}~W\}.
   \end{equation*}
   Then update 
   \begin{equation*}
   F(f)_{\mathrm{sim}} = F(f)_{\mathrm{sim}}+ \sum_{n=N_{\mathrm{sim}}}^{N_{\mathrm{corr}}-1} f(X^{\mathrm{par}}_n)\Delta \tau^{\mathrm{par}}_n,
   \end{equation*}
   set $N_{\mathrm{sim}} = N_{\mathrm{corr}}$, and 
   proceed to the dephasing stage.
\STATE
 {\bf Dephasing Stage} : Let $W$ be 
such that $X^{\mathrm{par}}_{N_{\mathrm{sim}}} \in W$, 
that is, $W$ is the metastable set from the end of the decorrelation stage. Generate $R$ independent samples $x_1, \ldots, x_R$ from $\mu$, the QSD of $X_n$ in $W$. Then proceed to the parallel stage.  
\STATE
{\bf Parallel Stage} : Start $R$ parallel processes $X^1_n, \ldots, X^R_n$ at $ x_1, \ldots, x_R$, and evolve them and 
the corresponding holding times 
$\Delta \tau^1_n,\ldots,\Delta \tau^R_n$ from time $n=0$ until time $N^*$. Then update
     \begin{equation}\label{eqn:embedded-parallel-stage}
     \begin{split}
    & F(f)_{\mathrm{sim}} = F(f)_{\mathrm{sim}} + \sum_{r=1}^{R} \sum_{k=0}^{N^*-2}f(X_k^r) \Delta \tau_k^r + \sum_{r=1}^{K} f(X_{N^*-1}^r) \Delta\tau_{N^*-1}^r\\
     &N_{\mathrm{sim}}= N_{\mathrm{sim}}+R(N^*-1)+K,
     \end{split}
     \end{equation}
    set ${X}_{N_{\mathrm{sim}}}^{\mathrm{par}} = X_{N^*}^{K}$, and return to the decorrelation stage.
\STATE 
     The algorithm is stopped when $N_{\mathrm{sim}}$ reaches some 
     user-chosen time $N_{\mathrm{end}}$. The stationary average $\pi(f)$ is estimated as 
     \[
     \pi(f)\approx F(f)_{\mathrm{sim}}/F(1)_{\mathrm{sim}}.
     \] 
\end{algorithmic}
\end{algorithm}

\begin{figure}[ht]
\centering
\includegraphics[scale=0.75]{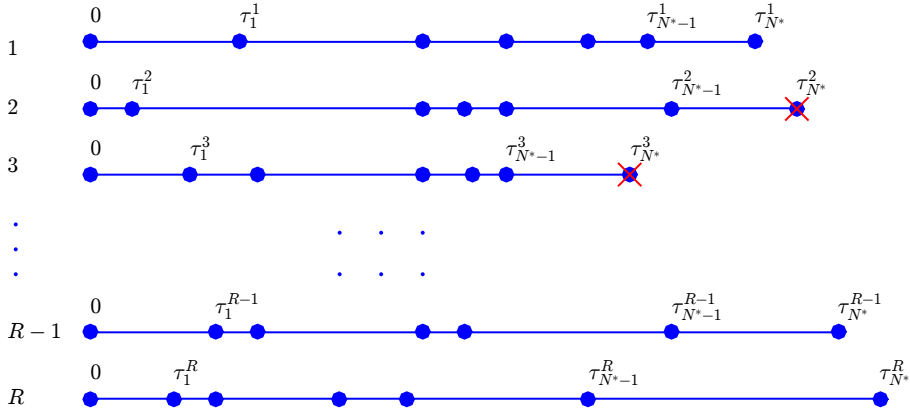}
\caption{The diagram for one parallel stage of the embedded ParRep algorithm with $R$ replicas. Each blue dot represents an exit event along the time line. Both replica $2$ and $3$ leave $W$ after $N^* = 6$ transitions (the blue dot with the red ``x''), in which case $K = 2$.}
\label{fig:embedded_ParRep}
\end{figure}

The DTMC $X_n$ and holding times 
$\Delta \tau_n$ are
simulated by the stochastic simulation algorithm (SSA), see, for instance, \cite{gillespie1977exact}, just as in the CTMC ParRep.
If $X^{\mathrm{par}}_n$ remains in $W$ for sufficiently long time (i.e., time $t_c$), it is distributed nearly 
according to the QSD $\mu$ of $X_n$ in $W$. See Theorem~\ref{thm:QSD-spectral}. This means 
that at the end of the decorrelation 
stage, $X^{\mathrm{par}}_n$ can 
be considered a sample of $\mu$.

The aim of the dephasing stage is to prepare a sequence of iid initial states with distribution $\mu$. 
Like the CTMC ParRep, rejection sampling can be used for the embedded ParRep as well. 
However, a more natural and efficient option for the embedded ParRep
is a Fleming-Viot 
based sampling procedure~\cite{binder2015generalized,ferrari2007quasi}. The 
procedure can be summarized as follows. 

The $R$ replicas 
$X^1_n,\ldots,X^R_n$, starting in $W$, evolve until one or more of them leaves $W$. 
Then each replica that left $W$ is restarted from the current state 
of another replica that is currently in $W$ (usually chosen uniformly at random). 
The procedure stops after the replicas have evolved for $n = n_p$ 
time steps, where $n_p$ is a parameter similar to $n_c$. (If all the 
replicas leave $W$ at the same time, the procedure restarts from 
the beginning.) With this type of sampling, the number of time 
steps simulated for each replica in the dephasing step is the same. 
In particular, if the $R$ parallel processors are synchronous (i.e. 
if each processor takes the same wall clock time to simulate one time step), then 
each processor finishes the dephasing step at the same wall clock time.
We comment that the Fleming Viot technique can be used to estimate the decorrelation and 
dephasing thresholds as well when they are difficult to choose a priori \cite{binder2015generalized}. 

The acceleration of the embedded ParRep comes from the parallel stage. 
Roughly, we only have to wait $N^*$ time steps instead of $N$ to observe 
an exit from $W$. The theoretical wall clock time speedup can 
be approximately a factor of $R$. See Theorem~\ref{thm:geometric-dist} 
below. 
Like with CTMC ParRep, the parallel step does not 
require metastability for this time speedup, but if 
$W$ is not metastable, then the dephasing step will not be efficient. 
See the remarks below Algorithm~\ref{alg:CTMC-ParRep}.

Similar to the CTMC ParRep, each parallel stage of the embedded ParRep has a consistent averaged contribution to the stationary average.   
Suppose that $x_1,\ldots,x_R$ are iid samples from $\mu$.
\begin{itemize}
\item [1.] The joint law of $(X_{N^K}^K,R(N^K-1)+K)$ is the same as that of $(X_{N^1}^1,N^1)$. That is, the joint distribution of the first exit time and the exit state for each parallel stage is independent of the number of replicas.
\item [2.] The expected value of 
\[
\sum_{r=1}^{R} \sum_{k=0}^{N^*-2}f(X_k^r) \Delta \tau_k^r + \sum_{r=1}^{K} f(X_{N^*-1}^r) \Delta\tau_{N^*-1}^r
\] is the same as that of 
\[
\sum_{n=0}^{N-1} f(X_n^1)\Delta \tau_n^1.
\]
Hence the expected contribution to $F(f)_{\mathrm{sim}}$ from each parallel stage is independent of the number of replicas. See Theorem~\ref{thm:embedded-parallel-unbiased} below.
\end{itemize}
See Theorem~\ref{thm:same-dist} 
and~\ref{thm:embedded-parallel-unbiased} 
for proofs of these statements.


We expect that embedded ParRep is superior to the CTMC ParRep for the following two reasons. 
First, consider the parallel 
stages of both algorithms. 
In the CTMC ParRep, observing the first exit event in the parallel stage is not sufficient to determine $T^*$.
But in embedded ParRep, once any replica leaves $W$, we know $N^*$. Thus the embedded ParRep parallel step terminates 
once any of the replicas leaves $W$. 
For this reason we expect the parallel 
stage of the embedded ParRep to be 
significantly
faster than that of the CTMC ParRep.
Second, consider the dephasing stage.
For the embedded ParRep,  
Fleming-Viot sampling is a natural  
technique because if the processors 
are synchronous then they all 
finish the dephasing stage at the same 
wall clock time, and only the current 
states of each processor are 
needed at each time step to decide where to restart replicas 
which left $W$. 
For asynchronous processors, one can simply implement 
a polling time.
This is not true, however, for Fleming-Viot 
sampling with the CTMC ParRep. Indeed, 
to implement Fleming-Viot sampling with the 
CTMC ParRep, one would have to store 
the histories of every replica, and 
the replicas would finish at potentially 
very different wall clock times.
The rejection method can be slow 
for both algorithms, 
particularly when the metastability is weak  or when the 
number of replicas is large.


\subsection{Error analysis of the embedded ParRep}

Now we are able to show that if the dephasing sampling is exact, 
then the expected contribution to $F(f)_{\mathrm{sim}}$ from the parallel stage is exact.

\begin{theorem}\label{thm:embedded-parallel-unbiased}
Suppose in the dephasing step $(x_1,\ldots,x_R) \sim \mu^R$. Then 
the expected contribution to $F(f)_{sim}$ 
from the parallel stage of 
Algorithm~\ref{alg:embedded-ParRep} 
is the same for every number of replicas.

\begin{equation*}
\begin{split}
\mathbb{E}^{\mu^R}\left[\sum_{r=1}^{R} \sum_{k=0}^{N^*-2}f(X_k^r) \Delta \tau_k^r + \sum_{r=1}^{K} f(X_{N^*-1}^r) \Delta\tau_{N^*-1}^r\right] &= {\mathbb E}^{\mu}\left[\sum_{n=0}^{N-1}f(X_n)\Delta \tau_n\right]
= \mu(f q^{-1})\mathbb{E}^{\mu}\left[N\right],
\end{split}
\end{equation*}
where $q$ is the function as defined in section \ref{sec:assume}.
\end{theorem}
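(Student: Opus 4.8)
The plan is to strip away the holding times by conditioning on the embedded chains, which reduces both claimed identities to statements about the discrete-time chain, and then to run the Fubini-type argument of Theorem~\ref{thm:CTMC-parallel-unbiased} in the discrete setting. Write $g := fq^{-1}$, so $\mu(g)=\mu(fq^{-1})$. Since $N^*$ and $K$ are measurable with respect to the embedded chains $\{X_n^r\}_{r,n}$ alone (exits from $W$ depend only on the visited states, not the holding times), I would condition on $\{X_n^r\}_{r,n}$ and invoke Lemma~\ref{lem:conditioned-exp}: the holding times are conditionally independent with ${\mathbb E}[\Delta\tau_k^r\mid\{X_n^r\}] = q(X_k^r)^{-1}$, so the tower property replaces each $\Delta\tau_k^r$ by $q(X_k^r)^{-1}$. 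This turns the left-hand side into ${\mathbb E}^{\mu^R}[S]$ with
\[
S := \sum_{r=1}^R\sum_{k=0}^{N^*-2}g(X_k^r) + \sum_{r=1}^K g(X_{N^*-1}^r),
\]
and, applied to a single replica, turns the middle expression into ${\mathbb E}^\mu[\sum_{n=0}^{N-1}g(X_n)]$. The final equality ${\mathbb E}^\mu[\sum_{n=0}^{N-1}g(X_n)] = \mu(g)\,{\mathbb E}^\mu[N]$ then follows exactly as in Theorem~\ref{thm:CTMC-parallel-unbiased}: rewrite the sum as $\sum_{n\ge 0}g(X_n)\mathbbm{1}_{N>n}$, take expectations, and use the QSD identity ${\mathbb E}^\mu[g(X_n)\mid N>n]=\mu(g)$ together with ${\mathbb P}^\mu(N>n)=\sigma_1^n$ from Theorem~\ref{thm:QSD-property}, so that $\sum_{n\ge0}\mu(g)\sigma_1^n = \mu(g)/(1-\sigma_1) = \mu(g)\,{\mathbb E}^\mu[N]$.

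The substance of the proof is the identity ${\mathbb E}^{\mu^R}[S]=\mu(g)\,{\mathbb E}^\mu[N]$. I would expand $S=\sum_{j\ge0}\sum_{r=1}^R g(X_j^r)\mathbbm{1}_{B_{j,r}}$, where $\mathbbm{1}_{B_{j,r}}$ records whether the term $g(X_j^r)$ is actually counted, and
\[
B_{j,r} = \{N^*>j+1\}\ \sqcup\ \big(\{N^*=j+1\}\cap\{K\ge r\}\big)
\]
is a disjoint union of an \emph{interior} event and a \emph{boundary} event. Interchanging sum and expectation (justified by boundedness of $g$ on $W$ and the geometric tails, e.g.\ when $W$ is finite), it suffices to evaluate $\sum_{j,r}{\mathbb E}^{\mu^R}[g(X_j^r)\mathbbm{1}_{B_{j,r}}]$ over the two events. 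Both events factorize across replicas by independence: the interior event is $\bigcap_{r'}\{N^{r'}>j+1\}$, while the boundary event requires that replicas $1,\dots,r-1$ survive past step $j+1$, that replicas $r,\dots,R$ survive through step $j+1$, and that at least one of $r,\dots,R$ exits exactly at step $j+1$.

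For the single-replica factors I would establish, via conditioning on $\mathcal{F}_j$ and the QSD property, the three identities ${\mathbb P}^\mu(N>m)=\sigma_1^m$, ${\mathbb E}^\mu[g(X_j)\mathbbm{1}_{N>j}]=\mu(g)\sigma_1^j$, and ${\mathbb E}^\mu[g(X_j)\mathbbm{1}_{N>j+1}]=\mu(g\,p_W)\sigma_1^j$, where $p_W(x):=\sum_{y\in W}p(x,y)$ is the one-step survival probability. The interior contribution then sums to $R\,\mu(g\,p_W)\,\sigma_1^{R-1}/(1-\sigma_1^R)$. For the boundary contribution I would resolve the constraint ``at least one of $r,\dots,R$ exits at $j+1$'' by inclusion--exclusion, $\mathbbm{1}_{\min=j+1}=\mathbbm{1}_{\text{all}\ge j+1}-\mathbbm{1}_{\text{all}\ge j+2}$, producing two geometric double sums in $(j,r)$. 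After summation the ``$\text{all}\ge j+1$'' piece collapses to $\mu(g)/(1-\sigma_1)=\mu(g)\,{\mathbb E}^\mu[N]$, while the ``$\text{all}\ge j+2$'' piece equals $-R\,\mu(g\,p_W)\,\sigma_1^{R-1}/(1-\sigma_1^R)$, which cancels the interior contribution exactly. Combining the pieces yields ${\mathbb E}^{\mu^R}[S]=\mu(g)\,{\mathbb E}^\mu[N]$, which would complete the proof.

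The hard part is the boundary bookkeeping. Unlike the CTMC case of Theorem~\ref{thm:CTMC-parallel-unbiased}, where simultaneous exits have probability zero and $T^*$ alone sets the cutoff, several embedded replicas may exit at the same step $N^*$, so the partial sum $\sum_{r=1}^K$ over the first $K$ escaping replicas must be handled with care. The delicate point is that the interior contribution evaluates to a quantity proportional to $\mu(g\,p_W)$ rather than the desired $\mu(g)$, and one must verify that exactly this spurious weight is produced, with the opposite sign, by the ``$\text{all}\ge j+2$'' part of the boundary term so that the two cancel. Because the cancellation hinges on the exponent identity $(j+1)(r-1)+j(R-r+1)=jR+r-1$ and on the fact that the $K$-truncation leaves precisely the complementary geometric mass, I would verify this arithmetic carefully; it is the one place where an off-by-one error in the treatment of $K$ or of the boundary level $j=N^*-1$ would silently break the result.
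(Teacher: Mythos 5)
Your proof is correct, but it takes a genuinely different route from the paper's. Both arguments begin the same way---condition on the embedded chains and use Lemma~\ref{lem:conditioned-exp} to replace each $\Delta\tau_k^r$ by $q(X_k^r)^{-1}$, reducing everything to the function $g=fq^{-1}$---but they diverge in the decomposition of the parallel-stage sum and in the final step. The paper writes the sum as the full rectangle $\sum_{r}\sum_{i=0}^{N^*-1}$ minus the over-counted tail $\sum_{r=K+1}^{R}(\cdot)_{N^*-1}$; every term is then conditioned on an event of the form $\{N^r>i\}$ (survival \emph{through} the current step), so the QSD identity gives $\mu(fq^{-1})$ directly, the two pieces evaluate to $\mu(fq^{-1})\,R\,{\mathbb E}[N^*]$ and $\mu(fq^{-1})(R-{\mathbb E}[K])$, and the conclusion follows by citing Theorem~\ref{thm:geometric-dist} for ${\mathbb E}^{\mu^R}[R(N^*-1)+K]={\mathbb E}^{\mu}[N]$. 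Your interior/boundary split instead conditions the interior terms on $\{N>j+1\}$ (survival one step \emph{beyond} the current state), which is why the one-step survival probability $p_W$ and the quantity $\mu(g\,p_W)$ appear and must be cancelled against the ``all $\ge j+2$'' part of the boundary term; I checked the exponent identity $(j+1)(r-1)+j(R-r+1)=jR+r-1$ and the geometric sums, and the cancellation does go through. What the paper's route buys is that $\mu(g\,p_W)$ never appears and the distributional work is outsourced to Theorems~\ref{thm:geometric-dist} and~\ref{thm:same-dist}; what yours buys is a self-contained, fully explicit computation that does not rely on those theorems and that makes the treatment of simultaneous exits (the role of $K$) completely transparent. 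If you wanted to shorten your argument, redefining the interior event as $\{N^*>j\}$ and the boundary correction as the subtracted terms with $r>K$ at level $j=N^*-1$ would let you condition only on $\{N^r>j\}$ throughout and avoid introducing $p_W$ altogether.
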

\begin{proof}
We first rewrite 
\begin{align}\begin{split}\label{eqn:time-multiple2}
&\sum_{r=1}^{R} \sum_{k=0}^{N^*-2}f(X_k^r) \Delta \tau_k^r + \sum_{r=1}^{K} f(X_{N^*-1}^r) \Delta\tau_{N^*-1}^r \\
&= \sum_{r=1}^{R}\sum_{i=0}^{N^*-1}f(X_i^r)\Delta\tau_i^r - \sum_{r=K+1}^{R}f(X_{N^*-1}^r)\Delta \tau_{N^*-1}^r.
\end{split}
\end{align} 
For the first part, we condition $N^*$ and obtain
\[
\mathbb{E}^{\mu^R}\left[\sum_{r=1}^{R}\sum_{i=0}^{N^*-1}f(X_i^r) \Delta\tau_i^r\right] = \sum_{r=1}^{R} \sum_{n=1}^{\infty} \sum_{i=0}^{n-1} \mathbb{E}^{\mu^R}\left[f(X_i^r) \Delta\tau_i^r \mathbbm{I}_{N^* = n} \right]
\]
Interchanging the iterated summations leads to 
\[
\sum_{r=1}^{R}\sum_{n=1}^{\infty} \sum_{i=0}^{n-1} \mathbb{E}^{\mu^R}\left[f(X_i^r) \Delta\tau_i^r \mathbbm{I}_{N^* = n} \right] = \sum_{r=1}^{R} \sum_{i=0}^{\infty}\mathbb{E}^{\mu^R}\left[f(X_i^r) \mathbbm{I}_{N^*>i}\Delta \tau_i^r \right].
\]
Notice $N^* > i$ is equivalent to $N^1 > i, \ldots, N^R > i$ and $\Delta \tau_i^r$ is independent of $N^s$ for $s \ne r$. Thus
\begin{equation*}
\begin{split}
&\sum_{r=1}^{R}\sum_{i=0}^{\infty}\mathbb{E}^{\mu^R}\left[f(X_i^r)\Delta\tau_i^r | N^*>i\right]\mathbb{P}^{\mu^R}({N^*>i})\\
 =& \sum_{r=1}^{R} \sum_{i=0}^{\infty} \mathbb{E}^{\mu^R}\left[f(X_i^r) \Delta\tau_i^r | N^r > i\right] \mathbb{P}^{\mu^R}(N^* > i).
\end{split}
\end{equation*}
Now by Lemma~\ref{lem:conditioned-exp} 
and the definition of the QSD,
\begin{align*}
\mathbb{E}^{\mu}\left[f(X_i^r) \Delta\tau_i^r | N^r > i\right] &= \mathbb{E}^{\mu}\left[\left.{\mathbb E}^{\mu}\left[\left. f(X_i^r) \Delta\tau_i^r \right| \{X_n^r\}_{n=0,1,\ldots}\right]\right| N^r > i\right]  \\
&= \mathbb{E}^{\mu}\left[\left.f(X_i^r){\mathbb E}^{\mu}\left[ \left.\Delta\tau_i^r \right| \{X_n^r\}_{n=0,1,\ldots}\right]\right| N^r > i\right]  \\
&= \mathbb{E}^{\mu}\left.\left[f(X_i^r)q(X_i^r)^{-1}\right| N^r > i\right] = \mu(fq^{-1}).
\end{align*}
Combining the last four equations gives
\begin{equation}\label{eqn:part1}
\mathbb{E}^{\mu^R}\left[\sum_{r=1}^{R}\sum_{i=0}^{N^*-1}f(X_i^r) \Delta\tau_i^r\right] = \mu(f q^{-1}) R\mathbb{E}^{\mu^R}[N^*].
\end{equation}

A similar argument can be applied to the second term on the right hand 
side of \eqref{eqn:time-multiple2}. First we condition $N^*$ and $K$ simultaneously such that
\begin{equation*}
\begin{split}
&\mathbb{E}^{\mu^R}\left[\sum_{r=K+1}^{R}f(X_{N^*-1}^r) \Delta \tau_{N^*-1}^r\right] \\
=&\sum_{n=1}^{\infty}\sum_{r=1}^{R}\sum_{r=k+1}^{R} \mathbb{E}^{\mu^R}\left[f(X_{n-1}^r)\Delta \tau_{n-1}^r | N^*=n, K=k\right]\mathbb{P}^{\mu^R}(N^*=n, K=k).
\end{split}
\end{equation*}
Interchanging the second and third summations the right-hand side equals 
\[
\sum_{n=1}^{\infty}\sum_{r=2}^{R}\sum_{k=1}^{r-1} \mathbb{E}^{\mu^R}\left[f(X_{n-1}^r)\Delta \tau_{n-1}^r | N^*=n, K=k\right]\mathbb{P}^{\mu^R}(N^*=n, K=k)
\] 
Recall that 
\begin{equation*}
N^* = n,\, K=k \Longleftrightarrow N^1>n, \ldots, N^{k-1}>n,\, N^k = n, \,N^{k+1}>n-1, \ldots, N^{R} > n-1.
\end{equation*}
 Thus, using independence of $X_n^1,\ldots,X_n^R$ and the definition of the QSD,
\begin{equation*}
\begin{split}
&\sum_{n=1}^{\infty}\sum_{r=2}^{R}\sum_{k=1}^{r-1} \mathbb{E}^{\mu^R}\left[f(X_{n-1}^r)\Delta \tau_{n-1}^r | N^*=n, K=k\right]\mathbb{P}^{\mu^R}(N^*=n, K=k)\\
=&\sum_{n=1}^{\infty}\sum_{r=2}^{R}\sum_{k=1}^{r-1} \mathbb{E}^{\mu}\left[f(X_{n-1}^r)\Delta \tau_{n-1}^r | N^r > n-1\right]\mathbb{P}^{\mu^R}(N^*=n, K=k)\\
= &\mu(f q^{-1})\sum_{n=1}^{\infty}\sum_{r=2}^{R}\sum_{k=1}^{r-1} \mathbb{P}^{\mu^R}(N^*=n, K=k)\\
= &\mu(f q^{-1})(R-\mathbb{E}^{\mu^R}[K]).
\end{split}
\end{equation*}
Combining the last three equations leads to 
\begin{equation}\label{eqn:part2}
\mathbb{E}^{\mu}\left[\sum_{r=K+1}^{R}f(X_{N^*-1}^r) \Delta \tau_{N^*-1}^r\right] = \mu(f q^{-1})(R-\mathbb{E}^{\mu^R}[K]).
\end{equation}
Subtracting \eqref{eqn:part2} from \eqref{eqn:part1}, we have
\[
{\mathbb E}^{\mu^R}\left[\sum_{r=1}^{R}\sum_{i=0}^{N^*-1}f(X_i^r)\Delta\tau_i^r - \sum_{r=K+1}^{R}f(X_{N^*-1}^r)\Delta \tau_{N^*-1}^r\right]
= \mu(f q^{-1})\mathbb{E}^{\mu^R}\left[R(N^*-1)+K\right].
\]
Now the result follows since 
\[
\mu(f q^{-1})\mathbb{E}^{\mu^R}\left[R(N^*-1)+K\right]= \mu(f q^{-1})\mathbb{E}^{\mu}[N]
\]
by Theorem~\ref{thm:same-dist}.
In particular, when $R=1$ we 
have $N^*=N$ and $K=1$, and thus 
\[
\mathbb{E}^{\mu}\left[\sum_{n=0}^{N-1} f(X_n) \Delta \tau_n\right] =  \mu(fq^{-1}) \mathbb{E}^{\mu}[N].
\]
\end{proof}


We now 
prove an analog of Theorem~\ref{thm:CTMC-F} for the embedded ParRep. 
Recall we have assumed 
convergence of $\|\mu_{n_c}-\mu\|_{\mathrm{TV}} \to 0$ as $n_c \to \infty$, for 
every starting point $x \in E$.
See for instance~Theorem~\ref{thm:QSD-spectral} for conditions guaranteeing 
this convergence.

\begin{theorem}\label{thm:embedded-F}
Consider the embedded ParRep 
starting at $ x \in W$ in the decorrelation stage.
Assume the dephasing stage sampling is exact, that is, $(x_1,\ldots,x_R) \sim \mu^R$.
Consider the expected contribution to $F(f)_{sim}$ 
up until the first 
time the simulation 
leaves $W$ (either in the 
decorrelation stage or in the parallel stage):
\begin{equation*}
\begin{split}
\Delta F(f)_{\mathrm{sim}} \triangleq \mathbb{E}^x  &\left[ \sum_{n=0}^{n_c \wedge N -1} f(X_n)\Delta \tau_n\right] 
+  \mathbb{E}^{x, \mu^R}\left[ \mathbbm{1}_{N>n_c} \sum_{r=1}^{R} \sum_{k=0}^{N^*-2}f(X_k^r) \Delta \tau_k^r \right.\\
&+\mathbbm{1}_{N>n_c} \left. \sum_{r=1}^{K} f(X_{N^*-1}^r) \Delta\tau_{N^*-1}^r\right],
\end{split}
\end{equation*} 
where $\mathbb{E}^{x,\mu^R}$ 
denotes expectation for $(X_n, X_n^1, \ldots, X_n^R)$ with $X_n$ starting at $x$ and the replicas $(X^1_n, \ldots,X^R_n)$ starting 
at the initial distribution $\mu^R$. 
The error compared to a direct (serial) simulation satisfies the bound 
\begin{equation}\label{embedded_error_one_cycle}
\left|\mathbb{E}^x\left[\sum_{n=0}^{N-1} f(X_n)\Delta \tau_n \right]-\Delta F(f)_{sim}
 \right|  \leq \|f\|_{\infty}\sup_{x \in W} \mathbb{E}^x\left[T\right] \|\mu_{n_c} - \mu\|_{\mathrm{TV}}.
\end{equation}
\end{theorem}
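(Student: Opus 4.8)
The plan is to mirror the proof of Theorem~\ref{thm:CTMC-F} step for step, replacing the continuous-time integrals by the discrete sums $\sum_n f(X_n)\Delta\tau_n$ and the exit time $T$ by the embedded exit index $N$. First I would write out the difference $\mathbb{E}^x[\sum_{n=0}^{N-1} f(X_n)\Delta\tau_n] - \Delta F(f)_{\mathrm{sim}}$ and split the serial sum at the decorrelation threshold, $\sum_{n=0}^{N-1} = \sum_{n=0}^{n_c \wedge N - 1} + \sum_{n = n_c \wedge N}^{N-1}$. The first piece cancels exactly against the decorrelation term appearing in $\Delta F(f)_{\mathrm{sim}}$. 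Since the tail sum is empty on $\{N \le n_c\}$, I can write $\sum_{n = n_c \wedge N}^{N-1} f(X_n)\Delta\tau_n = \mathbbm{1}_{N > n_c}\sum_{n = n_c}^{N-1} f(X_n)\Delta\tau_n$, so that only the event $\{N > n_c\}$ survives in both of the remaining terms.

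Next I would exploit the independence between the serial chain $X_n$ and the replicas $(X_n^1,\ldots,X_n^R)$, together with the fact that the indicator $\mathbbm{1}_{N > n_c}$ is measurable with respect to the serial chain alone. The serial tail term carries the factor $\mathbb{P}^x(N > n_c)$ through conditioning on $\{N > n_c\}$, while the parallel term carries the same factor once independence lets me separate $\mathbb{P}^x(N>n_c)$ from the replica expectation $\mathbb{E}^{\mu^R}[\,\cdot\,]$. Pulling this common factor out of the difference and bounding $\mathbb{P}^x(N > n_c) \le 1$ reduces matters to a difference of two expectations with no prefactor. The Markov property of the embedded chain at the deterministic time $n_c$ then yields $\mathbb{E}^x[\sum_{n=n_c}^{N-1} f(X_n)\Delta\tau_n \mid N > n_c] = \mathbb{E}^{\mu_{n_c}}[\sum_{n=0}^{N-1} f(X_n)\Delta\tau_n]$, using that the law of $X_{n_c}$ conditioned on survival is $\mu_{n_c}$ by definition~\eqref{eqn:DTMC-Yaglom}, and that each $\Delta\tau_n$ is determined in law by the embedded chain via Lemma~\ref{lem:conditioned-exp}. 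For the parallel term, Theorem~\ref{thm:embedded-parallel-unbiased} identifies the replica expectation as $\mathbb{E}^{\mu}[\sum_{n=0}^{N-1} f(X_n)\Delta\tau_n]$.

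At this point the difference is reduced to $|\mathbb{E}^{\mu_{n_c}}[G] - \mathbb{E}^{\mu}[G]|$, where $G = \sum_{n=0}^{N-1} f(X_n)\Delta\tau_n$. Writing this as $|\sum_{y \in W} \mathbb{E}^y[G]\,(\mu_{n_c}(y) - \mu(y))|$, I would bound the per-state factor by $|\mathbb{E}^y[G]| \le \|f\|_\infty \,\mathbb{E}^y[\sum_{n=0}^{N-1}\Delta\tau_n] = \|f\|_\infty\,\mathbb{E}^y[\tau_N] = \|f\|_\infty\,\mathbb{E}^y[T]$, where the identity $\tau_N = T$ links the discrete exit index to the continuous exit time and is precisely what produces the $\sup_{x\in W}\mathbb{E}^x[T]$ factor in the stated bound even though the summand is discrete. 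Pulling the supremum out and recognizing $\sum_{y \in W}|\mu_{n_c}(y) - \mu(y)|$ as the total variation distance $\|\mu_{n_c}-\mu\|_{\mathrm{TV}}$ finishes the estimate.

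I expect the main obstacle to be the Markov-property step. Unlike the CTMC case, here the summand couples the embedded state $X_n$ with its random holding time $\Delta\tau_n$, so I must verify that conditioning on $\{N > n_c\}$ restarts states \emph{and} holding times jointly as a fresh copy launched from $\mu_{n_c}$. This is exactly where Lemma~\ref{lem:conditioned-exp} is essential: it guarantees that the entire holding-time structure is encoded by the embedded chain, so that the ordinary Markov property at the fixed time $n_c$ transports both components correctly and the re-indexed tail sum becomes the full sum $\sum_{n=0}^{N-1} f(X_n)\Delta\tau_n$ started afresh.
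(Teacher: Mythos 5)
Your proposal is correct and follows essentially the same route as the paper's own proof: the same split of the serial sum at $n_c \wedge N$, the same use of independence to drop the common factor $\mathbb{P}^x(N>n_c)\le 1$, the same appeal to the Markov property and to Theorem~\ref{thm:embedded-parallel-unbiased}, and the same final bound via $\mathbb{E}^y\bigl[\sum_{n=0}^{N-1}\Delta\tau_n\bigr]=\mathbb{E}^y[T]$. Your added care about transporting the holding times jointly with the embedded states through Lemma~\ref{lem:conditioned-exp} is a point the paper leaves implicit, but it does not change the argument.
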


\begin{proof}

The proof is similar to that for the CTMC ParRep,  
\begin{equation*}
\begin{split}
&\left|\mathbb{E}^x\left[\sum_{n=0}^{N-1} f(X_n)\Delta \tau_n \right]-\Delta F(f)_{\mathrm{sim}}\right|\\
= &\left| \mathbb{E}^x   \left[\sum_{n=n_c\wedge N}^{N-1} f(X_n)\Delta \tau_n\right]
 - \mathbb{E}^{x,\mu^R}\left[\mathbbm{1}_{N>n_c}\sum_{r=1}^{R} \sum_{k=0}^{N^*-2}f(X_k^r) \Delta \tau_k^r \right. \right. \\
  &+ \mathbbm{1}_{N>n_c}\left.\left.\sum_{r=1}^{K} f(X_{N^*-1}^r) \Delta\tau_{N^*-1}^r  \right]  \right|\\
\leq & \left|\mathbb{E}^x\left.\left[\sum_{n=n_c}^{N-1} f(X_n)\Delta \tau_n\right| N>n_c \right]\right.\\
 &- \left.\mathbb{E}^{\mu^R}\left[\sum_{r=1}^{R} \sum_{k=0}^{N^*-2}f(X_k^r) \Delta \tau_k^r + \sum_{r=1}^{K} f(X_{N^*-1}^r) \Delta\tau_{N^*-1}^r\right]\right|.\\ 
\end{split}
\end{equation*}
By the Markov property
\begin{equation*}
\mathbb{E}^x\left.\left[\sum_{n=n_c}^{N-1} f(X_n)\Delta \tau_n\right| N>n_c \right] = \mathbb{E}^{\mu_{n_c}}\left[\sum_{n=0}^{N-1} f(X_n)\Delta \tau_n\right].
\end{equation*}
Owing to Theorem~\ref{thm:embedded-parallel-unbiased},
\[ \mathbb{E}^{\mu^R}\left[\sum_{r=1}^{R} \sum_{k=0}^{N^*-2}f(X_k^r) \Delta \tau_k^r + \sum_{r=1}^{K} f(X_{N^*-1}^r) \Delta\tau_{N^*-1}^r\right] =
{\mathbb E}^{\mu}\left[\sum_{n=0}^{N-1}f(X_n)\Delta \tau_n\right].
\]
Therefore
\begin{align*}
&\left|\mathbb{E}^x\left[\sum_{n=0}^{N-1} f(X_n)\Delta \tau_n \right]-\Delta F(f)_{\mathrm{sim}}\right| \\
&\le 
\left|\mathbb{E}^{\mu_{n_c}}\left[\sum_{n=0}^{N-1} f(X_n)\Delta \tau_n\right]
-
{\mathbb E}^{\mu}\left[\sum_{n=0}^{N-1}f(X_n)\Delta \tau_n\right]
\right| \\
&= \left|\sum_{x \in W} 
\mathbb{E}^{x}\left[\sum_{n=0}^{N-1} f(X_n)\Delta \tau_n\right]\mu_{n_c}(x) - 
\sum_{x \in W} {\mathbb E}^{x}\left[\sum_{n=0}^{N-1}f(X_n)\Delta \tau_n\right]\mu(x)\right| \\
&\le \|f\|_\infty \sup_{x \in W}{\mathbb E}^x[T] \|\mu_{n_c}-\mu\|_{TV}
\end{align*}
with the last equation coming from 
the fact that ${\mathbb E}^x[\sum_{n=0}^{N-1}\Delta \tau_n] = {\mathbb E}^x[T]$.
\end{proof}


\section{Numerical Experiments}
We present two numerical examples from the stochastic reaction networks in order to demonstrate the consistency and efficiency of the ParRep algorithms. 
\subsection{Reaction networks with linear propensity}
We consider the following stochastic reaction network
\begin{equation}\label{eqn:reaction}
\varnothing \longrightarrow A \rightleftharpoons B \longrightarrow C \longrightarrow \varnothing
\end{equation}
taken from \cite{cotter2011constrained},
where $A, B$ and $C$ represent reacting species.
The time evolution of the population (the number of species)  in the reaction network is commonly modeled as a CTMC 
$X(t) = (X_1(t), X_2(t), X_3(t))$ with state space $E \subset \mathbb{Z}_+^3$. 
The jump rate of each reaction is governed by the propensity function (intensity) $\lambda_j(x), j = 1,\ldots, 5$ such that for all $t>0$,
\[
\lambda_j(x) = \lim_{h \to 0}\dfrac{\mathbb{P}(X(t+h) = x + \eta_j | X(t) = x)}{h},
\]
where $\eta_j$ is the state change vector associated with the $j$th reaction.  
We list the reactions and their corresponding propensity functions and state change vectors in Table~\ref{tab:reaction-network-linear}. 

\begin{table}[ht]
\centering
\caption{Reactions, propensity functions and state change vectors}
\begin{tabular}{l|l|l}
\hline
Reaction & Propensity function & State change vector\\
\hline\hline
$\varnothing \longrightarrow A $ & $\lambda_1(x) = c_1$ & $\eta_1=(1,0,0)$\\
\hline
$A \longrightarrow B$  & $\lambda_2(x) = c_2 x_1$  & $\eta_2 = (-1,1,0)$ \\
\hline
$B \longrightarrow A$  & $\lambda_3(x) = c_3 x_2$  & $\eta_3 = (1,-1,0)$\\
\hline
$B \longrightarrow C$  & $\lambda_4(x) = c_4 x_2$  & $\eta_4 = (0,-1,1)$\\
\hline
$C \longrightarrow \varnothing$  & $\lambda_5(x) = c_5 x_3$  & $\eta_5 = (0,0,-1)$\\
\hline
\end{tabular}
\label{tab:reaction-network-linear}
\end{table}

In this numerical experiment, we take the initial state $x_0=(5,10,10)$
and the rate constants 
\[
(c_1, c_2, c_3, c_4, c_5)=(0.1, 100, 100, 0.01, 0.01).
\]
With this choice of parameters the timescale separation is about $\epsilon = 10^{-4}$ and hence the process $X(t)$ demonstrates metastability. 
The reactions $A \to B$ and $B\to A$ occur with a much higher probability than the other reactions and hence
we call $A \to B$ and $B\to A$ fast reactions and the other reactions slow reactions. 
The occurrence of slow reactions is a rare event.
We define the observables $f_1(x) = x_1 + x_2$ and $f_2(x) = x_3$, the collection of sets $\{W_{m,n}\}_{m,n \in \mathbb{Z}_+}$ with 
\[
W_{m,n} = \{ x \in E : f_1(x) = m, f_2(x) = n\}
\]
form a full decomposition of the state space $E$. 
Note that both the total population of species $A$ and $B$ (i.e., $f_1(X(t))$) and the population of species $C$ (i.e. $f_2(X(t))$) 
remain constant until one of the slow reactions occurs. 
Hence the typical sojourn time for $X(t)$ in each $W_{m, n}$ is very long comparing to the transition time between 
any two states that are in $W_{m, n}$. 
In this case, we say $X(t)$ is metastable in $W_{m, n}$.
For example, with the initial population $x_0=(1,1,0)$, the states 
$(1,1,0), (2, 0, 0)$ and $(0,2,0)$ form a metastable set since the fast reactions $A \to B$ and $B \to A$ occur with a significantly 
higher probability than slow reactions and only the occurrence of the slow reactions can allow the process to move from the metastable set to another metastable set. 
Note that both observables $f_1$ and $f_2$ defined above are invariant in each metastable set, we call them slow observables.
In general, an observable $f$ is called a slow observable if it is invariant in each metastable set $W_{m, n}$, i.e., 
there is a constant $C(m, n)$ such that $f(x) = C(m, n)$ for each $x \in W_{m, n}$. 
An observable is called a fast observable if it is not slow (e.g., $f(x) = x_1$). 

This kind of two-scale problems arise in many fields other than the stochastic reaction networks, such as the queuing theory and population dynamics.
Estimation of the distributions of two-scale processes can be computationally prohibitive due to the insufficient sampling of the rare events.
Therefore, it is desirable to apply the two ParRep algorithms proposed in this paper to accelerate the long time simulation and estimate the stationary distribution. 

\begin{figure}[ht]
\centering 
\includegraphics[scale=0.6]{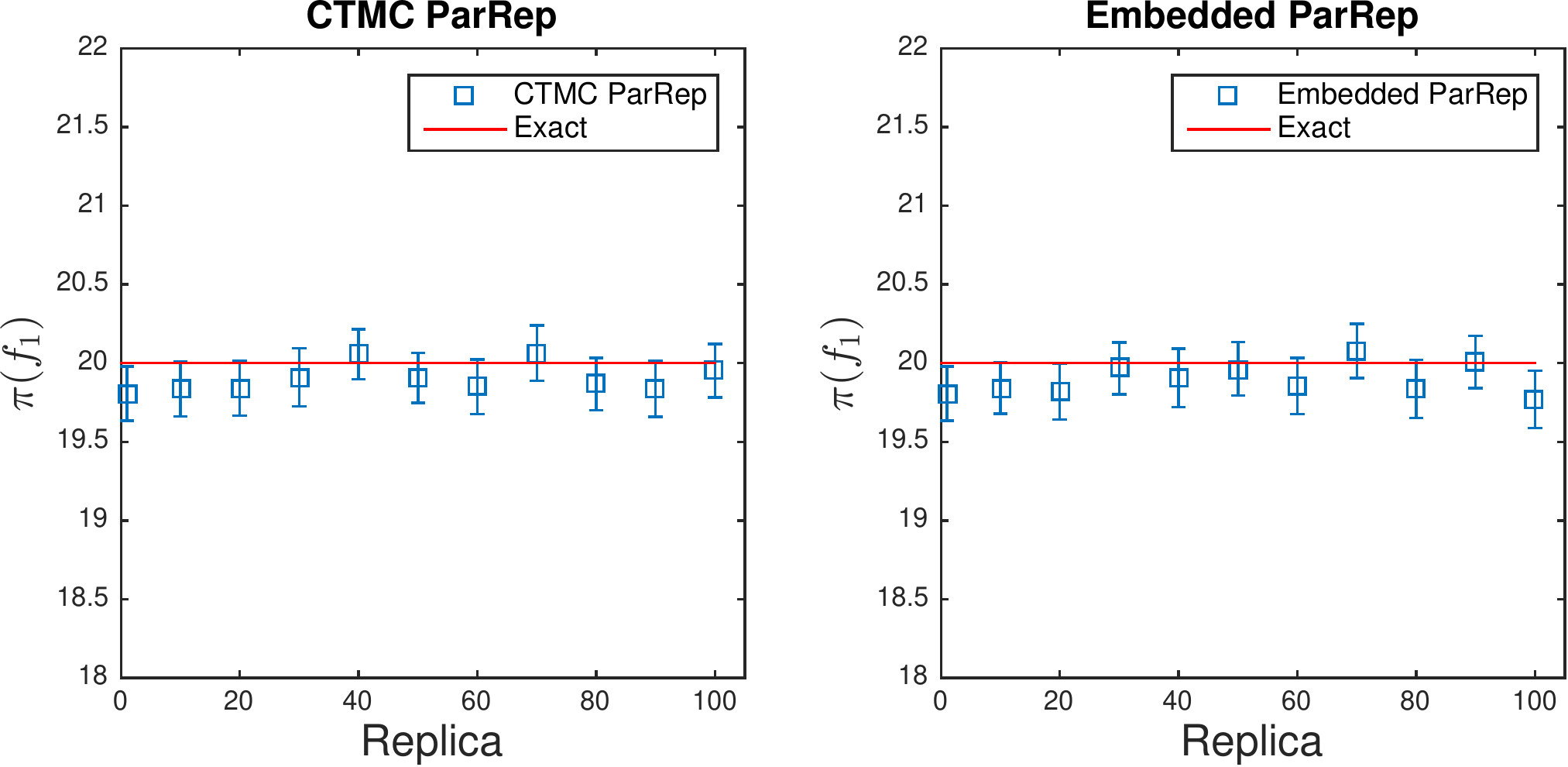}
\caption{The stationary average of the slow observable $f_1(x)=x_1+x_2$ computed with the CTMC ParRep (left) and with the embedded ParRep (right). The user-specified terminal time is $T_{\mathrm{end}} = 10^4$ in the simulation.}
\label{fig:slow_f1}
\end{figure}

\begin{figure}[ht!]
\centering 
\includegraphics[scale=0.6]{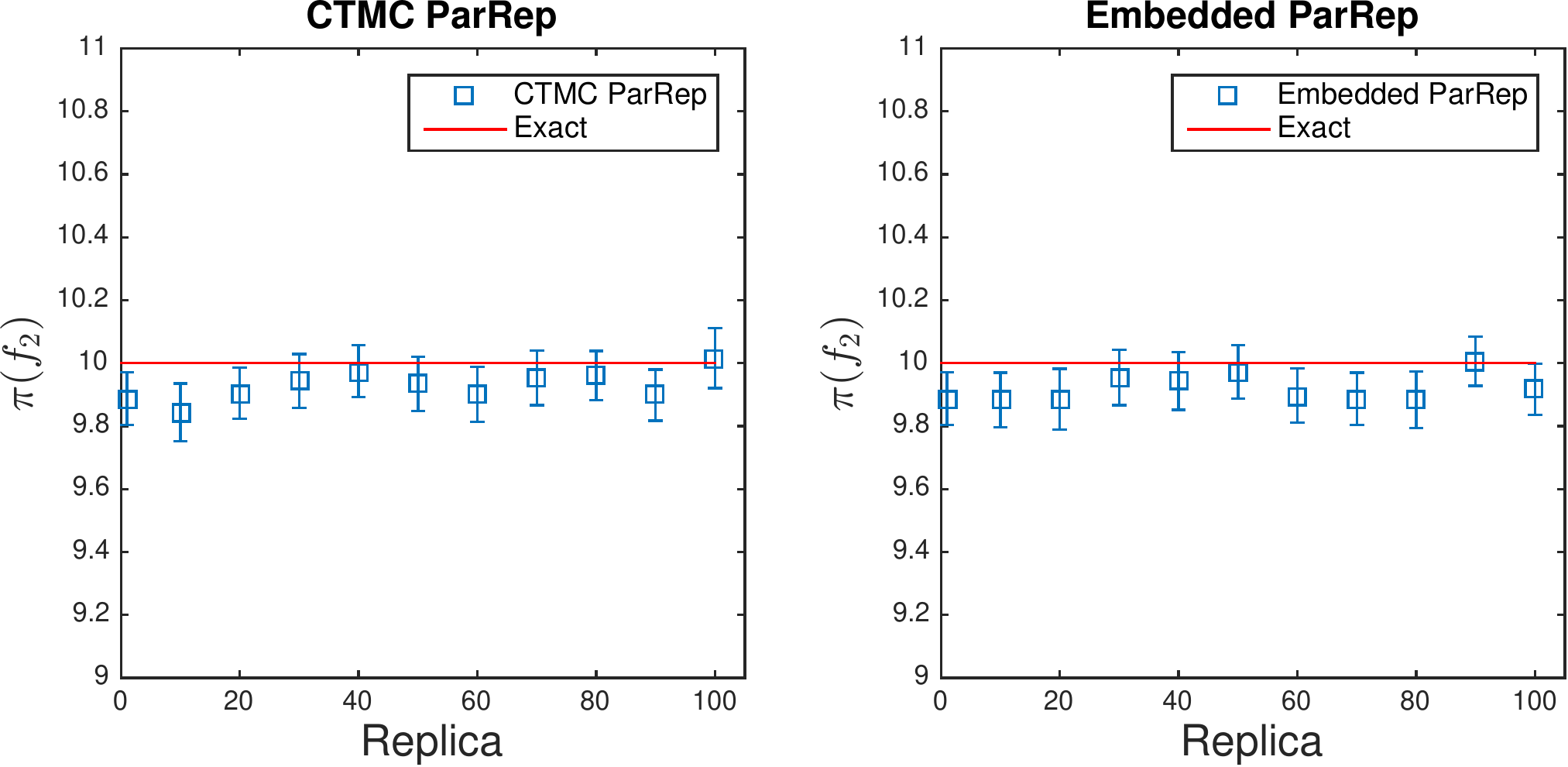}
\caption{The stationary average of the slow observable $f_2(x)=x_3$ computed with the CTMC ParRep (left) and with the embedded ParRep (right). The user-specified terminal time is $T_{\mathrm{end}} = 10^4$ in the simulation.}
\label{fig:slow_f2}
\end{figure}

\begin{figure}[ht!]
\centering 
\includegraphics[scale=0.6]{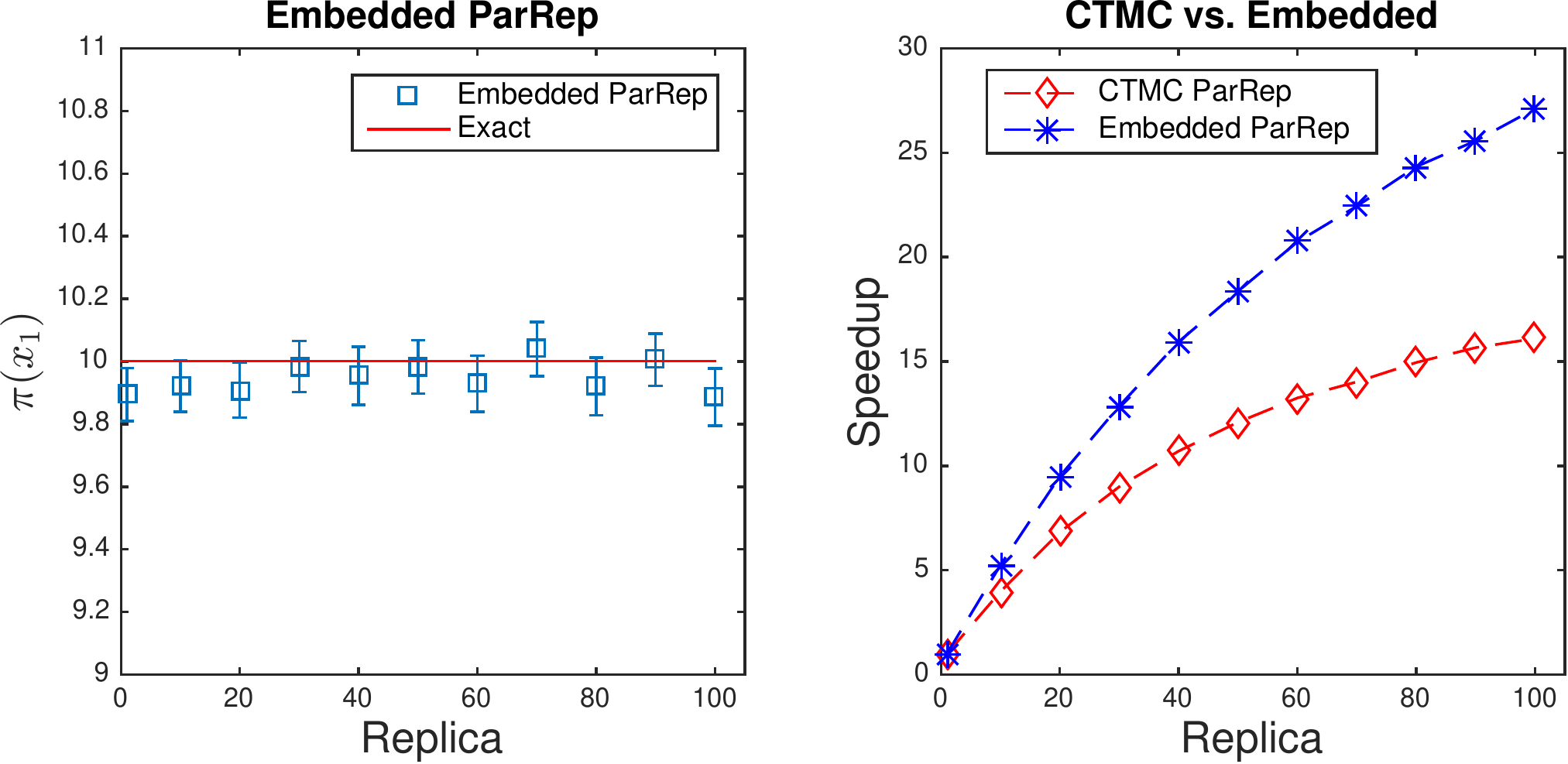}
\caption{The stationary average of the fast observable $f_3(x)=x_1$ computed with the embedded ParRep (left) and the speedup comparison between the CTMC ParRep and the embedded ParRep (right). The user-specified terminal time is $T_{\mathrm{end}} = 10^4$ in the simulation.}
\label{fig:fast_and_time}
\end{figure}

We apply both the CTMC ParRep and the embedded ParRep to estimate the stationary averages of the slow observables $f_1$ and $f_2$. 
The stationary distribution of the fast observable $f_3(x) = x_1$ is also computed using the embedded ParRep.
On the other hand, for the reaction network \eqref{eqn:reaction} under consideration, one can calculate the stationary distribution analytically since it only involves mono-molecular reactions.
In fact, it can be shown that the stationary distribution is a multivariate Poisson distribution \cite{cotter2011constrained}, that is,
\begin{equation}\label{eqn:exact}
\pi(x_1, x_2, x_3) = \frac{\bar{\lambda}_1^{x_1}\bar{\lambda}_2^{x_2}\bar{\lambda}_3^{x_3}}{x_1!x_2!x_3!} e^{-(\bar{\lambda}_1 + \bar{\lambda}_2+\bar{\lambda}_3)},
\end{equation} 
where 
\[
\bar{\lambda}_1 = \dfrac{c_1 (c_3 + c_4) }{c_2c_4},\quad \bar{\lambda}_2 = \dfrac{c_1}{c_4},  \quad   \bar{\lambda}_3 = \dfrac{c_1}{c_5}.
\]
Hence the exact stationary averages of the slow observables $f_1$ and $f_2$ are $\pi(f_1) = 20.001$ and $\pi(f_2) = 10$ and the exact stationary averages of the fast observable $f_3(x) = x_1$ is $10.001$.
We use this exact result to compare with our result from numerical simulation.

Our simulations compare the CTMC ParRep and the embedded ParRep with the Stochastic Simulation Algorithm (SSA), \cite{gillespie1977exact}.
In Figure~\ref{fig:slow_f1}, we demonstrate the estimation of $\pi(f_1)$ using the CTMC ParRep and the embedded ParRep with 
various numbers of replicas ($R=10,20,\cdots, 100$) and with SSA ($R=1$).
Similarly, Figure~\ref{fig:slow_f2} shows the estimation of $\pi(f_2)$.
Note that only the embedded ParRep is used to compute the stationary average of the fast variable $f(x) = x_1$ since the CTMC ParRep is not efficient for fast observables as we commented at the end of Section~\ref{sec:CTMC}.
Currently, the rejection sampling is used for dephasing and the decorrelation and dephasing thresholds 
are taken to be $t_c = t_p = 0.01$ for the CTMC ParRep and $n_c=n_p=15$ steps for the embedded ParRep.
In Figure~\ref{fig:fast_and_time}, the estimation for the fast observable and speedup 
are shown. 
It can be seen that with $10$ replicas,
the speedup factor is about $4.5$ for the CTMC ParRep and $5.5$ for
the embedded ParRep. 
When the number of replicas increases, the embedded ParRep becomes much more efficient than the CTMC ParRep.    
However, even the embedded ParRep is far away from the linear speedup (with $100$ replicas, about $27 $ times faster than SSA).      
This sublinear speedup comes from the fact that when the number of replica is large, the acceleration is offset by the inefficient rejection sampling based dephasing procedure. 
We expect that the embedded ParRep would be more efficient if the Fleming-Viot particle processes are used for dephasing.

\subsection{Reaction networks with nonlinear propensity}
In the second example, we focus on the following network from \cite{weinan2005nested},
\begin{equation}\label{eqn:nonlinear}
S_1 \rightleftharpoons S_2,\;\; S_1 \rightleftharpoons S_3, \;\; 2S_2 + S_3 \rightleftharpoons 3S_4.
\end{equation}
The propensity function and state change vector associated with each reaction is shown in Table~\ref{tab:reaction-network-nonlinear}. Note that by the law of mass action, the reactions $2S_2 + S_3 \rightleftharpoons 3S_4$ have nonlinear propensity functions.
\begin{table}[ht]
\centering
\caption{Reactions, propensity functions and state change vectors}
\begin{tabular}{l|l|l}
\hline
Reaction & Propensity function & State change vector\\
\hline\hline
$S_1 \longrightarrow S_2 $ & $\lambda_1(x) = c_1x_1$ & $\eta_1=(-1, 1, 0, 0)$\\
\hline
$S_2 \longrightarrow S_1 $  & $\lambda_2(x) = c_2x_2$  & $\eta_2 = (1, - 1, 0, 0)$ \\
\hline
$S_1 \longrightarrow S_3$  & $\lambda_3(x) = c_3 x_1$  & $\eta_3 = (-1, 0, 1, 0)$\\
\hline
$S_3 \longrightarrow S_1$  & $\lambda_4(x) = c_4 x_3$  & $\eta_4 = (1, 0, -1, 0)$\\
\hline
$2S_2 + S_3 \longrightarrow 3S_4$ & $\lambda_5(x) = c_5 x_2(x_2 - 1)x_3$ & $\eta_5 = (0, -2, -1, 3)$\\
\hline 
$3S_4 \longrightarrow 2S_2 + S_3$ & $\lambda_6(x) = c_6 x_3(x_3-1)(x_3-2)$ & $\eta_6 = (0, 2, 1, -3) $\\
\hline
\end{tabular}
\label{tab:reaction-network-nonlinear}
\end{table}

Throughout this example, we choose the initial state
$x_0 = (3, 30,30,30)$ and the reaction rate constants 
\[(c_1, c_2, c_3, c_4, c_5, c_6) = (0.1, 0.1, 0.1, 0.1, 2, 2)\]
for simulation. In this reaction network, $2S_2 + S_3 \rightleftharpoons 3S_4$ are fast reactions due to the cubic form of the propensity functions. The rest of the reactions are considered as slow reactions. 
We plot time evolution of the total propensity of reaction $5$ and $6$ versus the total propensity of reaction $1$ to $4$ in Figure~\ref{fig:nonlinear-timescale}
The timescale separation is more than $\epsilon = 10^{-4}$ as shown in the plot.
The slow observables $f_1(x) = x_2 + x_3 + x_4$ and $f_2(x) = x_1$ remain unchanged until any of the slow reactions occur.  
Therefore, the state space $E$ can be partitioned as a disjoint union of metastable sets in terms of slow observables $f_1$ and $f_2$. 
That is, $E = \cup W_{m,n}$, where $W_{m,n} = \{x \in E : f_1(x) = m, f_2(x) = n\}$.

\begin{figure}[h]\label{fig:nonlinear-timescale}
\centering
\includegraphics[scale=0.5]{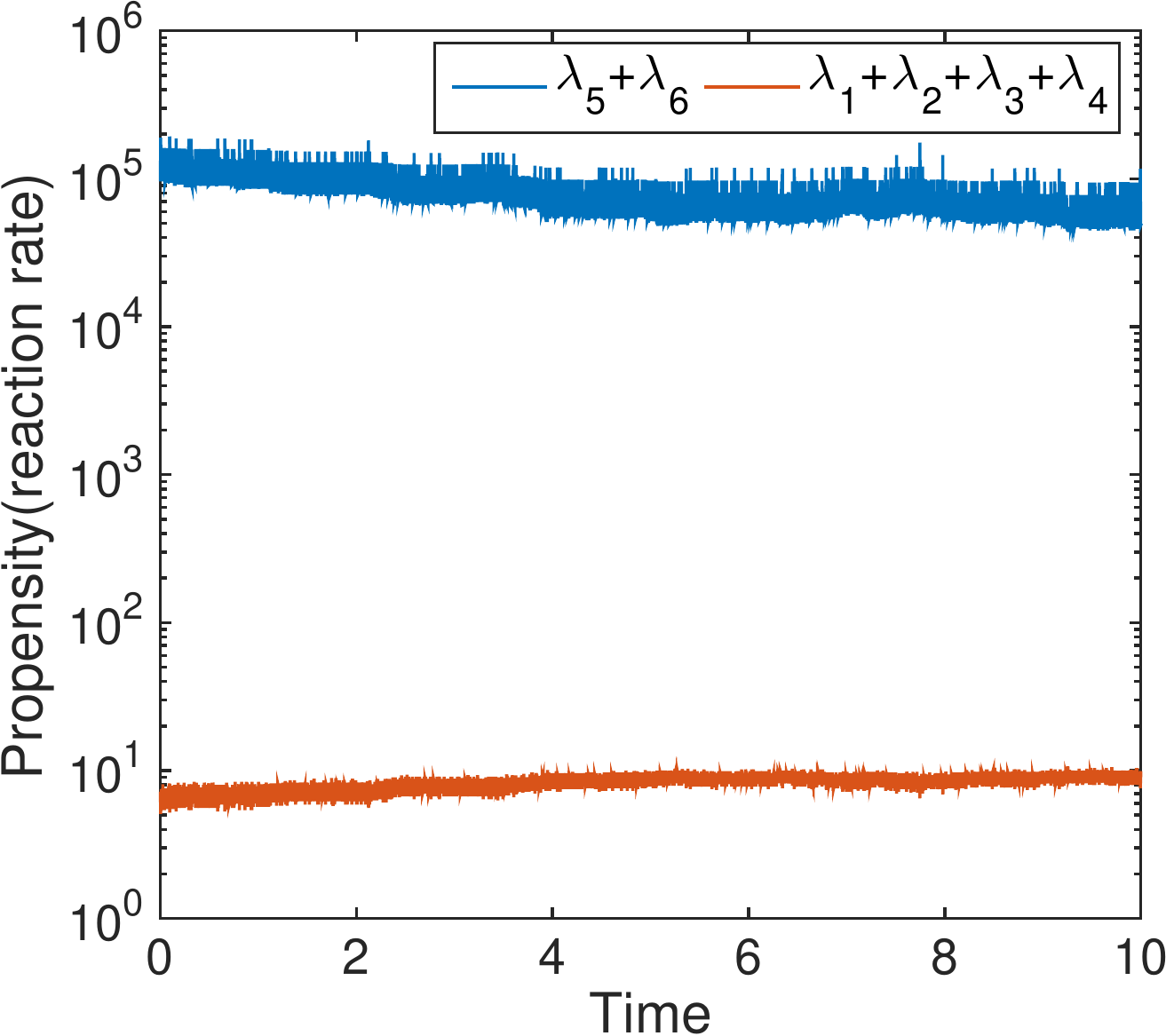}
\caption{The timescale separation between fast reactions and slow reactions. The blue curve above and the red curve below show the time evolution of $\lambda_5 + \lambda_6$ and $\lambda_1+\lambda_2+\lambda_3+\lambda_4$, respectively. It can be seen that the total propensity (reaction rate) of the last two reactions are more than $10^4$ larger than that of the first four reactions.}
\end{figure}
 
In the numerical simulation of \eqref{eqn:nonlinear}, we apply the embedded ParRep with rejection sampling and with Fleming-Viot (FV) sampling, respectively.
We are interested in the stationary average of the fast variable $\pi(x_4)$. 
The user-specified terminal time is chosen to be $T_{\mathrm{end}} = 10^3$, which is large enough for the system to be well into the stationary dynamics.
Figure~\ref{fig:nonlinear-avg} shows the estimated result (with confidence interval) for $\pi(x_4)$ with the rejection sampling based embedded ParRep (left) 
and the FV sampling based embedded ParRep (right), each with different decorrelation and dephasing thresholds.
The corresponding speedup factor is shown in Figure~\ref{fig:nonlinear-speedup}, where the left plot shows the speedup for $n_c = n_p = 20$ and the right plot shows the speedup for $n_c=n_p=60$.
It can be seen that when the decorrelation and dephasing thresholds are small (i.e., $20$), there is no performance enhancement (as shown in the left plot) when the rejection sampling is replaced by the FV sampling. 
This is consistent with our expectation that most of the replicas finish the dephasing stage after $20$ transitions (that is, no replicas escape the metastable set in $20$ transitions) and hence the FV sampling is not needed to improve the performance.  
However, when the thresholds are increased to $60$, the FV sampling based embedded ParRep outperforms the rejection sampling based embedded ParRep especially when the number of replica is large, as shown in the right plot.   
We expect that the FV sampling based ParRep would be more advantageous than the rejection sampling based ParRep when large $n_c$ and $n_p$ are needed, e.g., when the time scale separation is very large (say $\epsilon = 10^{-10}$). 

Finally, we comment that 
in many cases of stochastic reaction network model the timescales of the dynamics could change
over time. 
For instance, the last two reactions are slow if we choose $(100, 3, 3, 3)$ as the initial state in this numerical example. 
However, when the number of $S_2$ and $S_3$ increases, the last two reactions become fast.
If we still define the last two reactions as the slow reactions, then the parallel stage will be not be activated in which case the ParRep 
becomes equivalent to SSA. 
A possible remedy for this issue is to use dynamic partition of slow and fast reactions. See \cite{weinan2005nested} for a detailed discussion. 
We will deal with this issue in a separate work \cite{TWPP:2017}. 

\begin{figure}\label{fig:nonlinear-avg}
\centering
\includegraphics[scale=0.6]{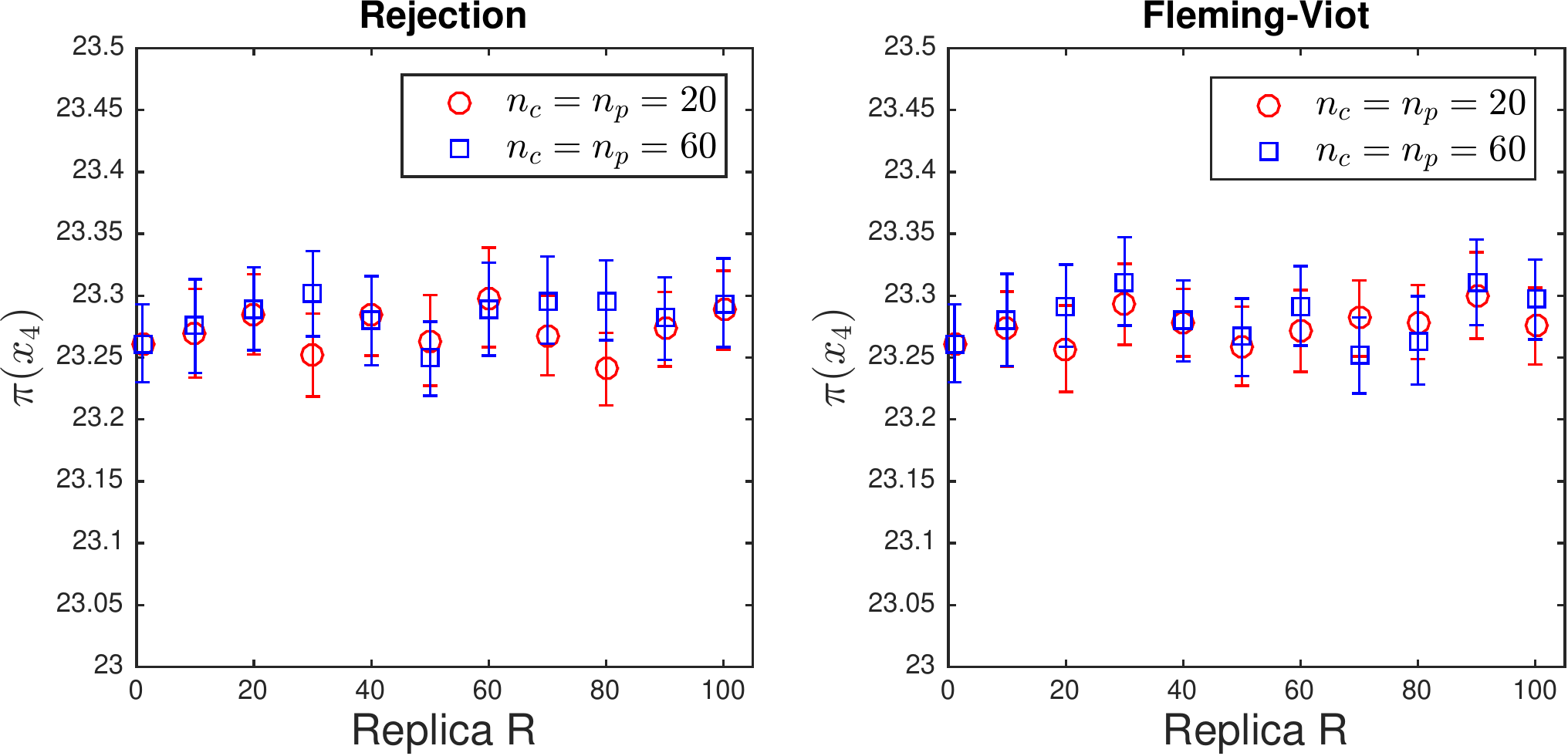}
\caption{Stationary average (with confidence interval) of the fast observable $x_4$ computed with $20$ decorrelation and dephasing steps and $60$ decorrelation and dephasing steps, respectively. Both the rejection sampling and the Fleming-Viot sampling are used for the dephasing stage.}
\end{figure}

\begin{figure}\label{fig:nonlinear-speedup}
\centering
\includegraphics[scale=0.6]{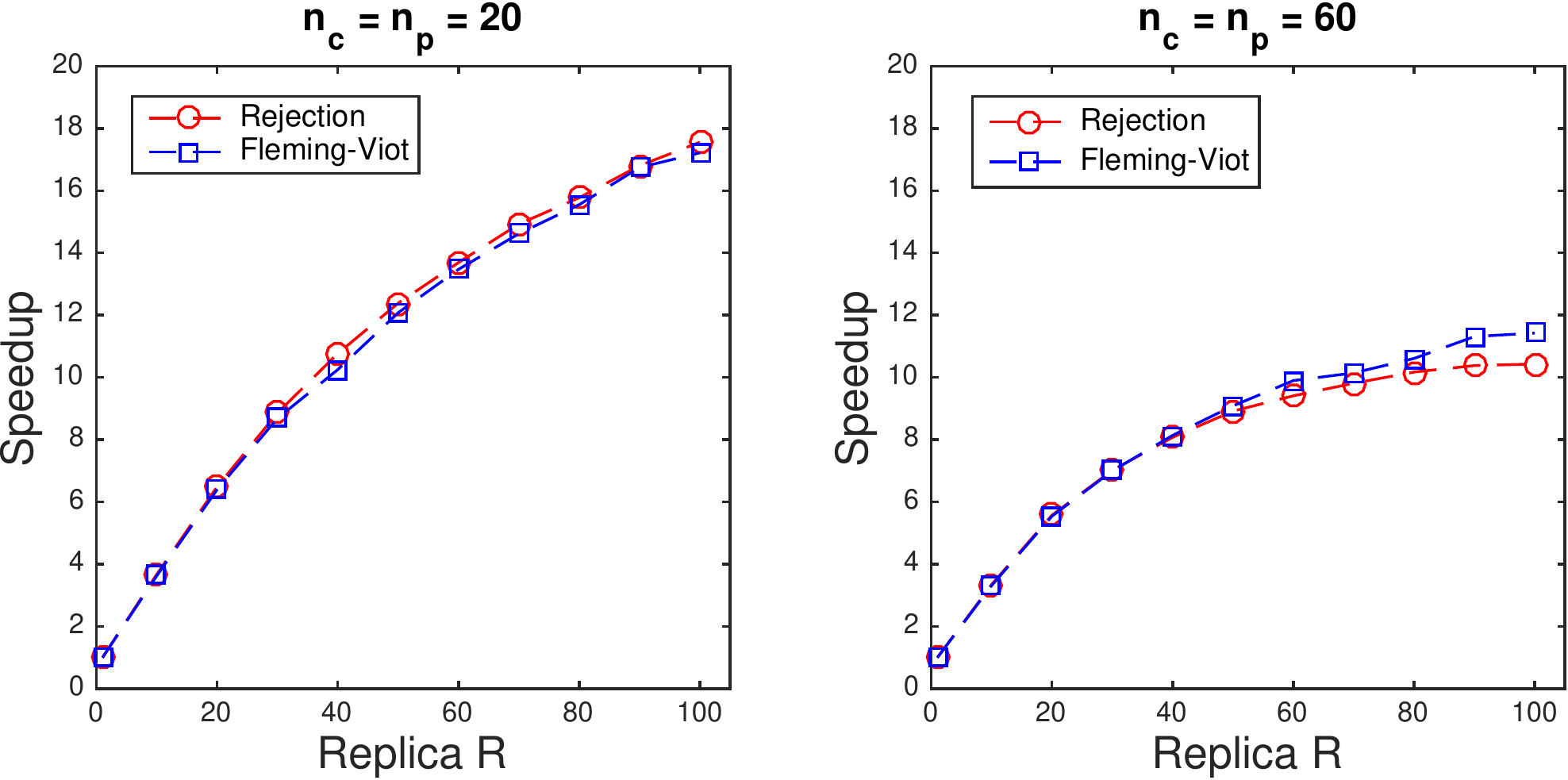}
\caption{Speedup of the ParRep with the rejection sampling based dephasing and the ParRep with Fleming-Viot sampling based dephasing.}
\end{figure}

\section{Conclusions}
In this paper, we propose a new method for simulating metastable CTMCs and estimating its stationary distribution with an application to stochastic reaction network models. 
The method is based on the parallel replica dynamics which first appeared in \cite{voter1998parallel}.
The ParRep method proposed here does not require the reversibility (detailed balance) of the simulated Markov chain, 
which is the necessary assumption for most accelerated algorithms for metastable dynamics simulation.
This makes the ParRep particularly well suited for stochastic reaction network model where the reversibility is not satisfied in general.    

To accelerate the estimation of stationary distribution of a metastable CTMC, our method introduces a source of error: 
we sample an approximation of the QSD of the metastable set in each decorrelation and dephasing stages. 
However, our error analysis shows that in average
the error from each ParRep cycle decays exponentially assuming the dephasing stage sampling is   
exact. Moreover, our numerical examples also suggest the consistency of the ParRep method.
The global error analysis for ParRep (i.e., the error accumulated over the entire simulation) is much more involved and will be the focus of our future work. 

The mathematical theory underlying the ParRep method predicts that we could achieve approximately linear speedup in terms of the number of replicas. 
However, due to the computation in the decorrelation and dephasing stages, the acceleration achieved in practical implementations is sub-linear.
Nevertheless, we observe a considerable performance enhancement in presented numerical examples. 
We believe further speedup is possible with a better parallel implementation of the algorithm on massively parallel clusters.   

In the numerical examples considered in this paper, we define the metastable sets in terms of the slow observable 
and assume that the partition of fast and slow reactions are fixed with time. 
However, it is quite common that the timescales of the dynamics can change
over time in many cases, especially in stochastic reaction model.
In many models the separation of time scales can change of timescales over the course of system's evolution. 
Another type of 
For example, such situation
occurs in stochastic reaction networks with multiple stationary points. 
In such a case the partition of the fast and slow reactions changes when the process leaves from the neighborhood of a current stable stationary
point and move to the neighborhood of another stable stationary point. 
In this case, a different strategy (rather than fast - slow reactions) can be used to define the metastable sets. 
The ParRep method for dynamics with bistability is discussed in \cite{TWPP:2017}. 

The algorithms developed in this paper assumes that the underlying processors are synchronous. 
However, we believe both the CTMC ParRep and the embedded ParRep can be implemented in asynchronous architectures as well.
In particular, the idea for handling asynchronous processors discussed in \cite{ParRep-SDE} (Section $3$) can, in principle, be applied to the embedded ParRep as well. 
We will focus on formalizing these synchronization ideas in our future work.  


\section*{Acknowledgment}
The work of T.W. has been supported by the U.S. Department of Energy, Office of Science, 
Office of Advanced Scientific Computing Research, Applied Mathematics program under the contract 
number DE-SC0010549. 
The work of P.P.  was partially supported by the DARPA project W911NF-15-2-0122.
D.A. gratefully acknowledges support from the National Science Foundation via the award NSF-DMS-1522398.


\bibliographystyle{siamplain}
\bibliography{ParRep}
\end{document}